\newcommand{\R}{\mathbb{R}}
\newcommand{\bs}{\it}
\newcommand{\N}{\mathbb{N}}
\newtheorem{prop}{Proposition}
\newtheorem{theorem}{Theorem}
\newtheorem{definition}{Definition}
\title{This is some thing}
\author[1]{Stefano De Marchi \thanks{demarchi@math.unipd.it}}
\author[2]{Andrea Idda \thanks{a.idda1989@gmail.com}}
\author[3]{Gabriele Santin \thanks{santinge@mathematik.uni-stuttgart.de}}
\affil[1]{Department of Mathematics - University of Padova (Italy)}
\affil[2]{Banco Popolare - Verona (Italy)}
\affil[3]{IANS - University of Stuttgart (Germany)}
\title{A rescaled method for RBF approximation}
\begin{document}
\maketitle

\abstract{
In the recent paper \cite{DFQ14}, a new method to compute stable kernel-based interpolants has been 
presented. 
This \textit{rescaled interpolation} method combines the standard kernel interpolation with a 
properly defined rescaling operation, 
which smooths the oscillations of the interpolant. Although promising, this procedure lacks a 
systematic theoretical investigation.
Through our analysis, this novel method can be understood as standard kernel interpolation by means 
of a properly rescaled kernel.
This point of view allow us to consider its error and stability properties. 
}

\section{Introduction}
In the last decades radial basis functions have shown to be a flexible mathematical tool to solve 
scattered data
interpolation problems, to model neural networks, a meshfree method for solving differential 
equations, parametrizing shape and surfaces
and so on. Interested readers can refer, for example, to the comprehensive monographs \cite{B03,W02, 
FMcC15}, that give the necessary 
theoretical background and discuss many of the applications here enumerated.

In the paper \cite{DFQ14} has been presented an interpolation method for the construction of stable 
kernel-based interpolants, 
called \textit{rescaled interpolation}. It is a consistent local method that combines the standard 
kernel interpolation with a properly defined rescaling operation, 
which essentially smooths the oscillations of the interpolant. Although promising, the method lacks 
a systematic theoretical understanding.

After recalling some necessary notation, we present the method and prove that it is an instance of 
the 
well-known {\it Shepard's method}, when certain weight functions are used. 
In particular, as for the Shepard's one, it reproduces constant functions.

Second, it is possible to define a modified set of cardinal functions strictly related to the ones 
of the not-rescaled kernel. Through these
functions, we define a Lebesgue function for the rescaled interpolation process, and study its 
maximum - the Lebesgue constant - in different settings.

Also, a preliminary theoretical result on the estimation of the interpolation error is presented.

As an application, we couple our method with a partition of unity algorithm. This setting seems to 
be the most promising, 
and we illustrate its behavior with some experiments. 
The method has been also compared with the variably scaled kernel interpolation studied in 
\cite{BLRS15}.

We summarize briefly the paper structure. In the next section we introduce some basic definitions 
useful to understand
the results presented in the paper. Then, in successive section \ref{RI} we present the {\it 
rescaled localized} RBF interpolant and 
discuss some of its properties. In particular, in 
the successive subsection \ref{SubSecRI} we present the kernel-based approach to the rescaled 
interpolant, formalizing some
results already presented in \cite{DFQ14}. In the case of kernels depending 
on the shape parameter, an interesting property of the method is that the shape parameter can be 
chosen neither too small or too big.
In section \ref{Section3} we show that this interpolant is indeed a {\it Shepard's approximant}
and so it reproduces constants functions ``at glance''. Stability results of this construction are 
detailed in the successive subsection. 
The most promising application is the Partion of Unity Method (PUM). 
In Section \ref{Section4} we apply the rescaled interpolant to the PUM, showing then in the 
numerical experiments its effectiveness.
Finally in Section 5, we compare the behavior of the standand interpolant with respect to that of 
the rescaled and the {\it variably scaled}
ones, which was firstly studied in \cite{BLRS15}.
We will show that the combination
of PUM with the rescaled interpolant provides a stable method for interpolation.

\section{Uselful notations}
We start by recalling some notations useful for the sequel and necessary to understand the results 
that we are going to present.

Given a real Hilbert space ${\cal H}$ of functions from $\mathbb{R}^d$ to $\mathbb{R}$ with inner 
product $\langle \,,\, \rangle_{\cal H}$, a function 
$K : \Omega \times \Omega \longrightarrow \mathbb{R}$ with $\Omega \subset \R^d$, is called {\it 
reproducing kernel} for ${\cal H}$ 
if the two properties hold: 
\begin{enumerate}
 \item[(i)] $K(\cdot, {\bs x}) \in {\cal H}$ for all ${\bs x} \in \Omega$;
 \item[(ii)] $\langle f, K(\cdot, {\bs x}) \rangle_{\cal H}=f({\bs x})$ for all $f \in {\cal H}$ and 
${\bs x} \in \Omega$.
\end{enumerate}
The kernel is symmetric (by property (ii)) and positive definite by construction (cf. e.g. 
\cite[\S 2.3]{FMcC15} or \cite[\S 13.1]{F07}).

Let $\Phi: \mathbb{R}^d \rightarrow \mathbb{R}$ be a continuous function. $\Phi$ is called {\it 
radial} if there exists a continuous function
$\varphi: \R_0^+ \rightarrow \mathbb{R}$ such that $\Phi({\bs x})=\varphi(\|{\bs x}\|)$.

We are interested on {\it radial kernels}, i.e. kernels of the form $K({\bs x}, {\bs y})=\Phi({\bs 
x},{\bs y})=\varphi(\| {\bs x} - {\bs y} \|)$, which means {\it invariant under translation and 
rotation}, with $\|\cdot \|$ the Euclidean distance. Radial kernels are often referred as {\it 
Radial Basis Functions} of shortly RBF. Moreover, since $\varphi$ is a univariate function, we can 
call it the {\it basic radial function} and its values are function of a positive variable $r=\|{\bs 
x}\|$. 

Examples of ${\cal C}^\infty$ kernels are in Table \ref{table1} and kernels with finite smoothness 
in Table \ref{table2}. An interesting review of 
nonstandand kernels is the paper \cite{DeMS09} which presents many examples of kernels besides the 
classical ones presented in the Tables \ref{table1} and 
\ref{table2}.
We have written the kernels by introducing the parameter $\epsilon>0$ which is simply a {\it shape 
parameter}: for $\epsilon \to \infty$, the functions become more and more spiky while as $\epsilon 
\to 0$ they become flatter (see Figure \ref{fig1}).
In the case of Wendland's kernels the parameter $d$ denotes the maximal space dimension for which 
the functions are positive definite. With the symbol $\doteq$ we denote ``equal up to some constants 
factors''. The parameter $k$ indicates that they are ${\cal C}^{2k}(\R^d)$. For example for $d=1$, 
$l=3$ we have $\varphi_{1,1}(r)\doteq (1-r)^4_+(4r+1)$, which is the well-known compactly supported 
Wendland ${\cal C}^2$ function. Introducing also for Wendland functions a shape parameter 
$\epsilon>0$, we simply stretch the support to $[0,1/\epsilon]$
having $\delta=1/\epsilon$ as its diameter.
\begin{table}[!ht]
\centering
\begin{tabular}{|| c | c | c | c ||}
\hline 
kernel & parameter & support & name \\ 
\hline 
$\varphi(\epsilon r)=e^{-\epsilon^2 r^2}$ & $\epsilon > 0$ & $\R_0^+$ &  gaussian \\ 
$\displaystyle \varphi(\epsilon r)=(1+ \epsilon^2 r^2)^{-1} $ & $\epsilon > 0$ &  $\R_0^+$  & 
inverse quadrics \\ 
$\displaystyle \varphi(\epsilon r)=(1+ \epsilon^2 r^2)^{-1/2}  $ & $\epsilon > 0$ &  $\R_0^+$ & 
inverse multiquadrics \\
\hline
\end{tabular}
\caption{Examples of infinitely smooth kernels} \label{table1}
\end{table}

\begin{table}[!ht]
\centering
\begin{tabular}{|| c | c | c | c ||}
\hline 
kernel & parameter & support & name \\ 
\hline 
$\varphi(\epsilon r)=e^{-\epsilon r}$ & $\epsilon > 0$ &  $\R_0^+$ &  M0 \\ 
$\varphi(\epsilon r)=(1+\epsilon r) e^{-\epsilon r}$ & $\epsilon > 0$ &  $\R_0^+$ &  M2 \\ 
$\displaystyle \varphi_{d,0}(r) \doteq (1-r)^{l}_+$ & $l=\lfloor d/2+k+1 \rfloor$ &  $[0,1]$  & W0 
\\ 
$\displaystyle \varphi_{d,1}(r) \doteq (1-r)^{l+1}_+ ((l+1)r+1)$ & $l=\lfloor d/2+k+1 \rfloor$ &  
$[0,1]$  & W2 \\ 
\hline
\end{tabular}
\caption{Examples of kernels with finite smoothness. M0 and M2 are Mat\'ern kernels, 
W0 and W2 are Wendland kernels with smoothness $0$ and $2$ respectively. } \label{table2}
\end{table}

\begin{figure}[t]
\centering 
\includegraphics[scale=0.5]{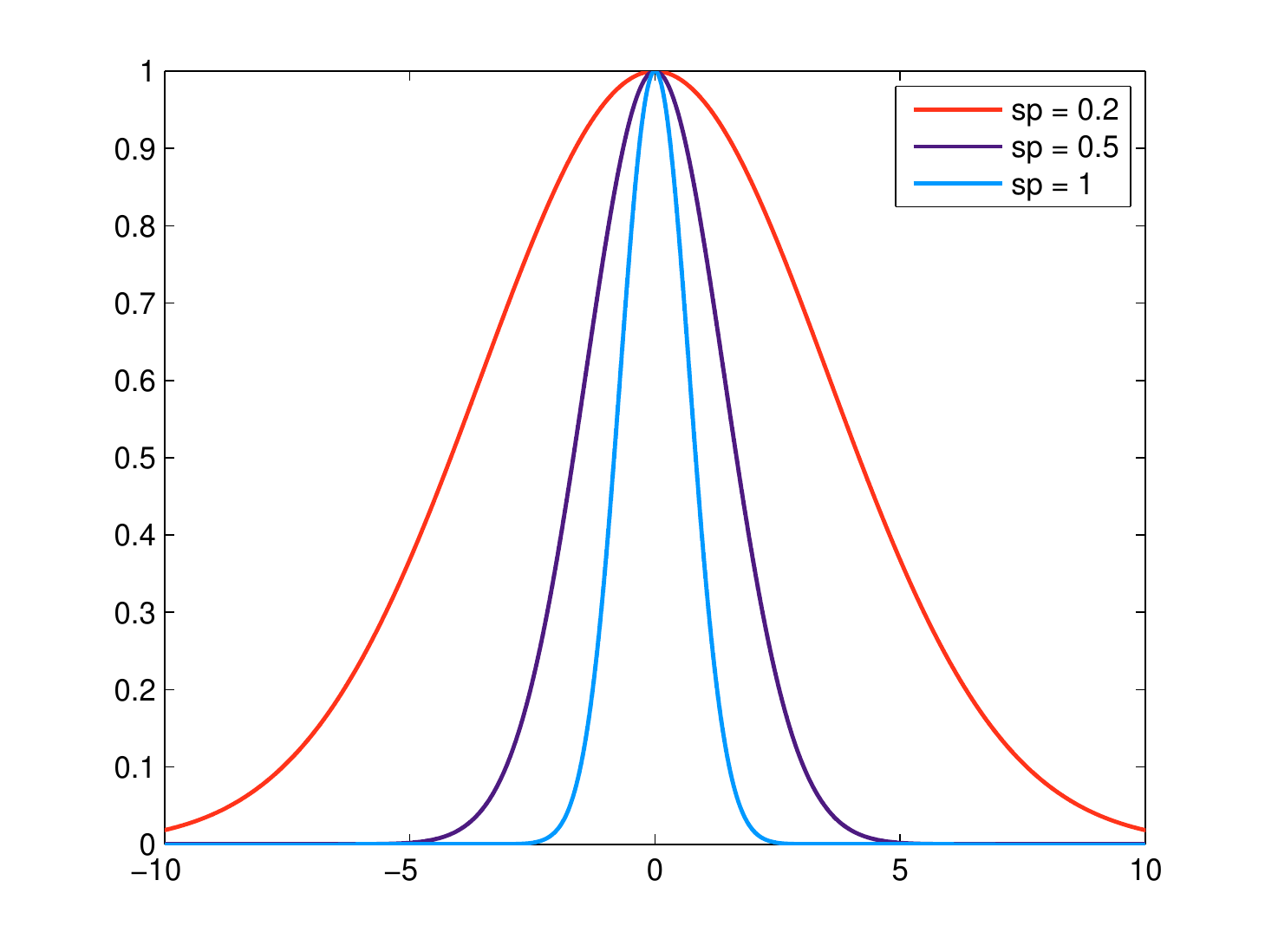}
\caption{The effect of changing the shape parameter on the gaussian kernel plotted in $[-10,10]$} 
\label{fig1}
\end{figure}
Now, given a function $f: \Omega \rightarrow \R$, a set $X=\{x_1,\ldots,x_N\} \subset \Omega$ of $N$ 
distinct points and the values
${\bs f}_X=(f(x_1), \ldots, f(x_N))^T$ of the function $f$ at the set $X$, we seek for interpolants 
of ${\bs f}_X$ of the form
\begin{equation} \label{eq1}
P_f({\bs x})=\sum_{i=1}^N c_i K({\bs x},{\bs x}_i),\, x\in\Omega.
\end{equation}
for kernels $K$ which are strictly positive definite and radial. This means that $P_f$ is a 
function in $H_K(X)={\rm span}\{K(\cdot, {\bs x}_i), \; i=1,\ldots, N\}$, formed by translates of 
$K$ at the point set $X$. 
The coefficients in (\ref{eq1}) are determined by imposing the interpolation conditions, 
which is equivalent to 
find the unique solution of the linear system $A  {\bs c}={\bs f}_X$ with $A_{i,j}=K({\bs x}_i,{\bs 
x}_j)$. 
Since the kernel $K$ is assumed to be strictly positive definite then the solution exists and is 
unique.

The Hilbert space in which the kernel $K$ is reproducing, is known as the associate {\it native 
space}. We will denote it by ${\cal N}_K$, instead of $\mathcal H$, to underline the dependence on 
the kernel. It is equipped by the scalar product $(\cdot, \cdot)_{\mathcal N_K}$, from which we get 
the {\it native space norm} $\|\;\|_{{\cal N}_K}$ (cf. e.g. \cite{F07}).

The interpolation process by kernels of functions $f \in {\cal N}_K$, 
gives pointwise errors of the form (see e.g. \cite[p. 174]{FMcC15} or \cite[p. 19]{DeMS09})
\begin{equation} \label{error}
 |f({\bs x})- P_f({\bs x})| \le C h_{X,\Omega}^\beta \|f\|_{{\cal N}_K}
\end{equation}
for some appropriate exponent $\beta$ depending on the smoothness of the kernel $K$, and 
$h_{X,\Omega}$ that denotes
the {\it mesh-size} (or fill-distance)
$h_{X,\Omega} = \max_{{\bs x} \in \Omega} \min_{{\bs x}_i \in X} \| {\bs x} -{\bs x}_i\|_2\,$. For 
functions belonging to bigger spaces
than the native space, suitable estimates
are based on {\it sampling inequalities} as discussed e.g. in \cite[\S 9.4]{FMcC15}.

\section{The rescaled interpolant} \label{RI}
In \cite{DFQ14} the authors have proposed a new compactly supported RBF interpolant with the aim of 
a more accurate interpolantion
even by using a small diameter for the support. More precisely, on the set of points $X$, we 
consider the constant function 
$g({\bs x})=1$ $\forall x\in\Omega$, and we denote by $P_{g}({\bs x})$ the corresponding 
kernel-based interpolant, that is
$$ P_{g}({\bs x}) = \sum_{i=1}^N d_i K({\bs x},{\bs x}_i)\,,$$
whose coefficients ${\bs d}=(d_1,\ldots,d_N)^T$ can be determined as the solution of the linear 
system $A{\bs d}={\bs 1}$, with
the vector ${\bs 1}$ of ones.

Then, the {\it rescaled} interpolant is
\begin{equation} \label{rescaled}
\hat{P}_{f}({\bs x})={P_{f}({\bs x}) \over P_{g}({\bs x})}={ \sum_{i=1}^N c_i K({\bs x},{\bs x}_i) 
\over \sum_{i=1}^N d_i K({\bs x},{\bs x}_i)} \,.
\end{equation}
As a simple illustrative example we want to interpolate $f(x)=x$ on the interval $[0,1]$ by using 
the W2 function at the points set 
$X=\{1/6,1/2,5/6\}$ with shape parameter $\varepsilon=5$. The function, the interpolant and the 
errors $|f(x)- P_{f}(x)|$, 
$|f(x)- \hat{P}_{f}({x})|$ are displayed in Figure \ref{Figure2}. 
\begin{figure}[!ht]
\begin{center}
 \includegraphics[width=5.4cm, height=4.4cm]{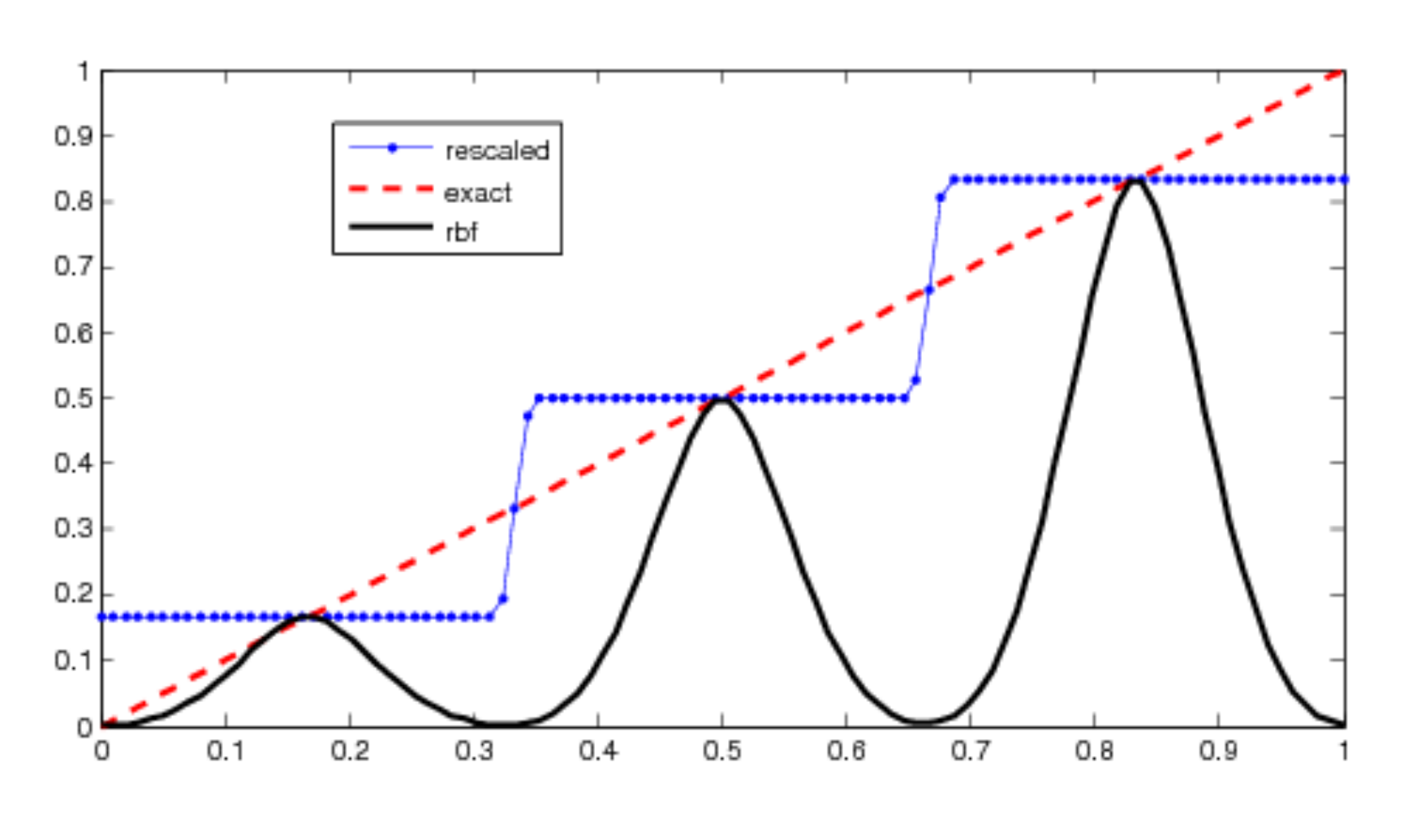}
 \includegraphics[width=5.6cm, height=4.5cm]{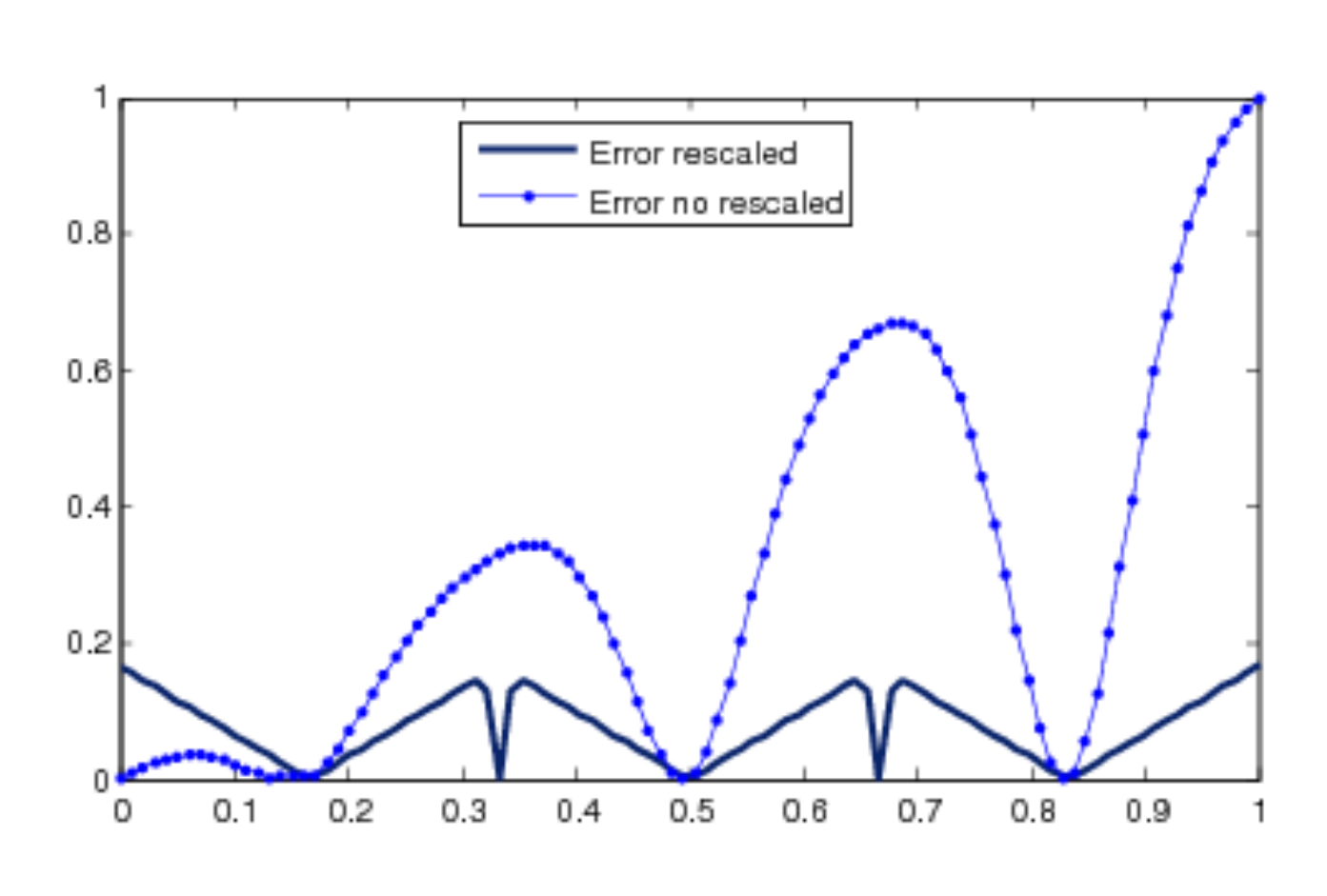}
 \caption{Left: the function $f(x)=x$ interpolated with the compactly supported W2 function and the 
rescaled interpolant at the data set $X=\{1/3,2/3,5/6\}$ 
 and $\varepsilon=5$. Right: the corresponding absolute errors.} \label{Figure2}
\end{center}
 \end{figure}
The shape parameter has been chosen as the reciprocal of the support radius of the corresponding 
basis function. Indeed, in this example the interpolant is the combination of {\it three} W2 radial 
functions all having radius of the support $r_j=1/5, \; j=1,2,3$. 
Refining the point set, that is considering the points set $X=\{0,1/6,1/3,1/2,2/3,5/6,1\}$, we get 
the results shown in Figure \ref{Figure2_1} 
showing the most interesting behavior of the rescaled interpolant, the property to reduce 
oscillations and so the interpolation error.
\begin{figure}[t]
\centering
 \includegraphics[width=5.8cm, height=4.7cm]{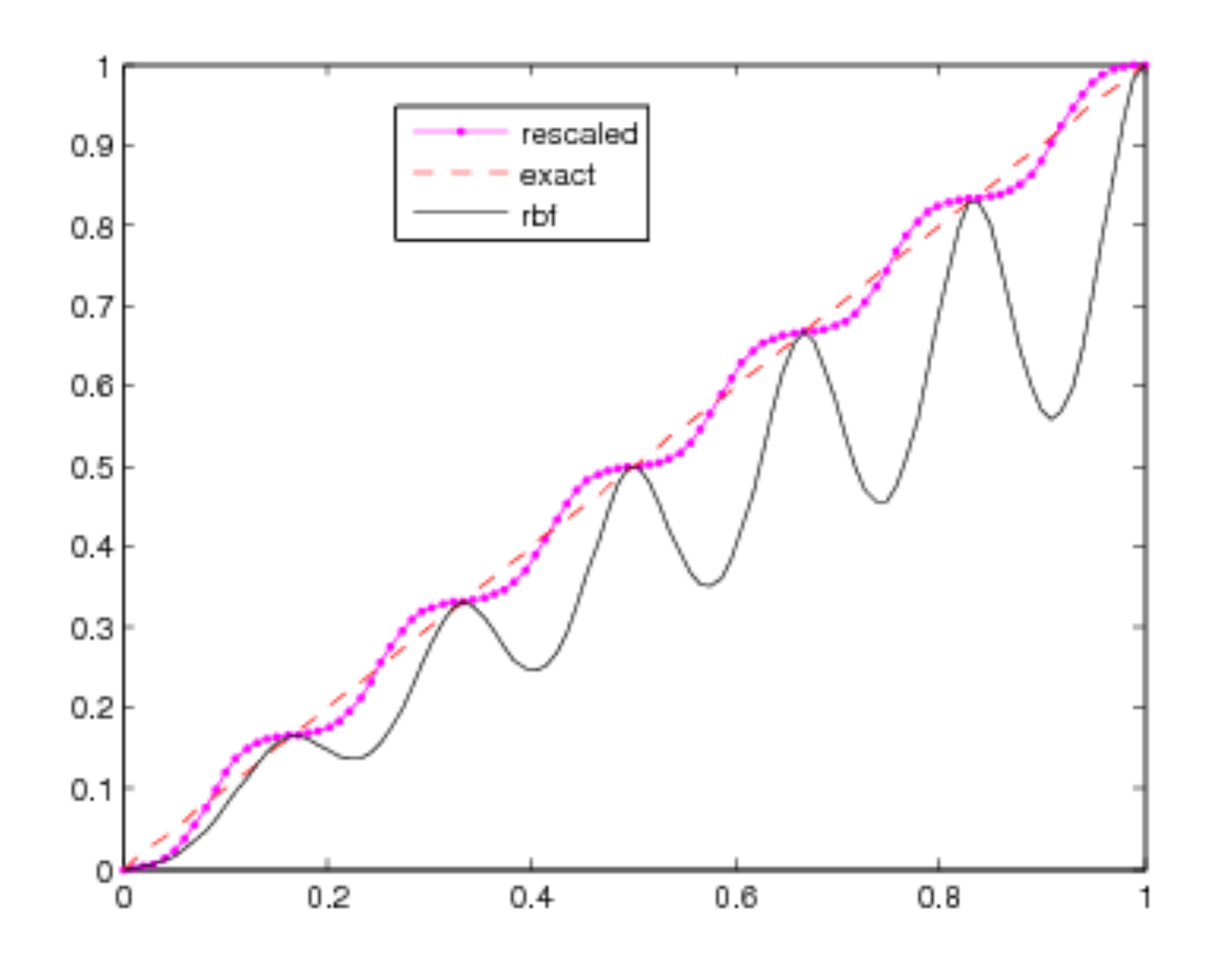}
 \includegraphics[width=5.8cm, height=4.8cm]{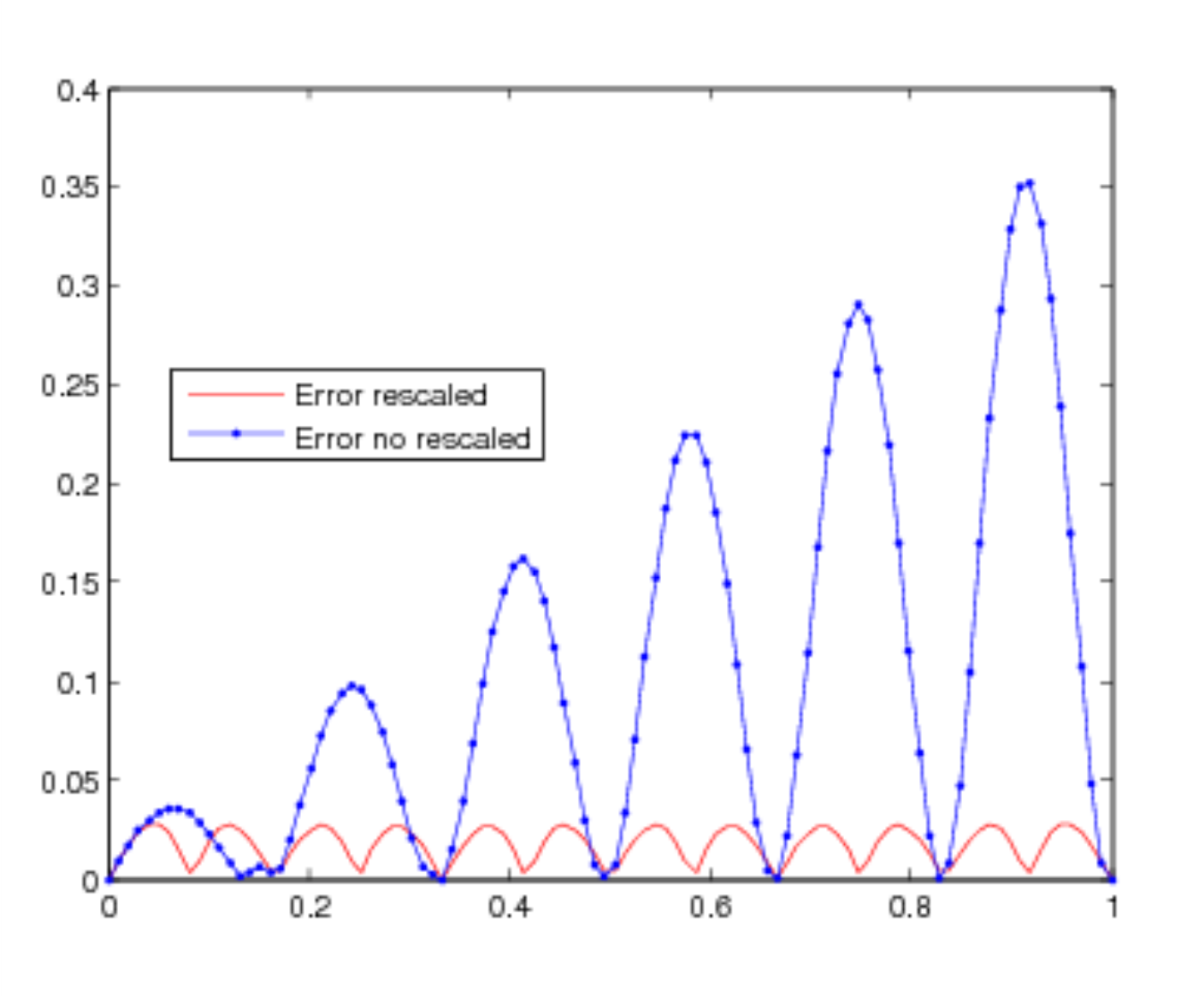}
 \caption{As in Figure \ref{Figure2} for the set $X=\{0,1/6,1/3,1/2,2/3,5/6,1\}$ and 
$\varepsilon=5$.} \label{Figure2_1}
\end{figure}

We show in Figure \ref{fig2} the RMSE (Root Mean Square Error) of the stationary interpolation with 
the rescaled interpolant (\ref{eq1}) w.r.t. the classical one (\ref{eq1}) on a grid of $25$ data 
points of the square $[0,1]^2$ at different values of the shape parameter, by using the W2 radial 
function for the 2d {\it Franke function} 
\begin{eqnarray} \label{f12d} 
  f(x,y) &=& \frac{3}{4}{\tt e}^{-\frac{1}{4}((9x-2)^2+(9y-2)^2)} + \frac{3}{4}{\tt 
e}^{-\frac{1}{49}(9x+1)^2-\frac{1}{10}(9y+1)} \\ \nonumber
        &  + & \frac{1}{2}{\tt e}^{-\frac{1}{4}((9x-7)^2+(9y-3)^2)} - \frac{1}{5}{\tt 
e}^{-(9x-4)^2-(9y-7)^2}.
        \end{eqnarray}
Similar results can be obtained by using different radial basis functions as studied in 
\cite{DFQ14,Idda}.
\begin{figure}[t]
\centering
  \includegraphics[scale=0.5]{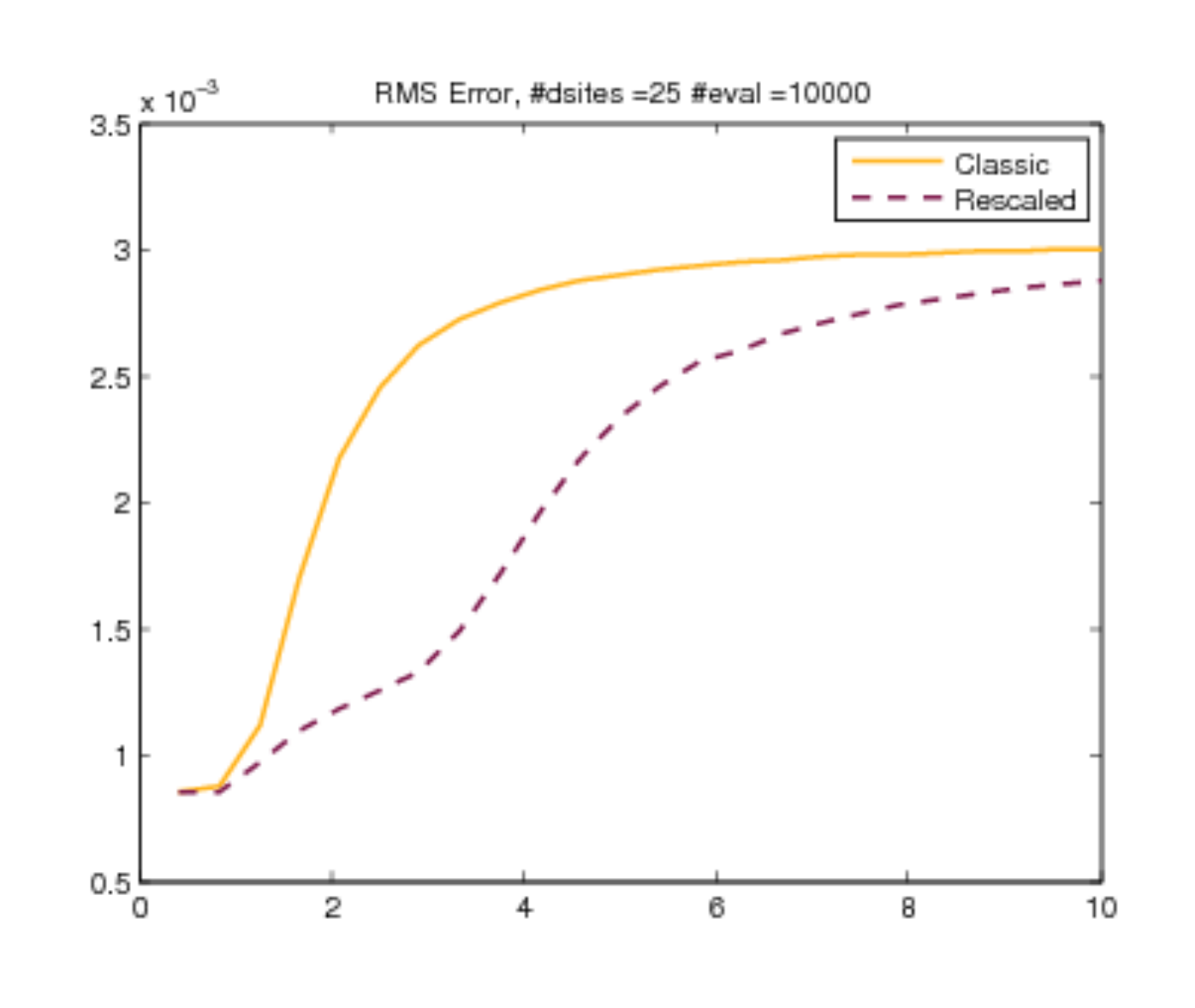}
  \caption{RMSE behavior at different values of the shape parameter for interpolation of the Franke 
function with W2 radial function.} \label{fig2}
\end{figure}

{\bf Remarks.} On looking to the way in which the interpolant is constructed and the previous 
figures we can observe
\begin{itemize}
 \item The interpolant is smooth even for small radii of the support.
 \item Thanks to the normalization introduced in (\ref{rescaled}), the method can choose for each 
$x_m$ a strategy to locally select the
 shape parameter $\varepsilon$ in order to take into account the data points distribution. In the 
paper \cite{DFQ14} the choice is made
 so that the local radius of the compactly supported kernel gives a constant number of neighbors. 
This strategy fails when the points are uniformly distributed while
 gives much better results when the points are not equidistributed. In this second case we can in 
fact consider different radii and neighbor points.
\end{itemize}

\subsection{The rescaled kernel}\label{SubSecRI}
Since our interest is the study of the rescaled interpolant as a 
new kernel which has an associated native space we start by the following observation. 
An interpolation process consists in approximating the function $f$ by its interpolant, say $P_f$, 
that is, for a constant $k_0\in\R$,
$$P_f({\bs x}) \approx f({\bs x}), \;\; \forall \; {\bs x} \in \Omega\,.$$
where equality holds for $x \in X$. Equivalently we can say that
$$P_f({\bs x}) - f({\bs x}) \approx k_0, \;\; \forall \; {\bs x} \in \Omega\,.$$ 
Assuming that $f \ne 0$, then 
\begin{equation} \label{eq2}
{P_f({\bs x}) - f({\bs x}) \over f({\bs x})} \approx{ k_0 \over f({\bs x})} =k_0, \;\; \forall \; 
{\bs x} \in \Omega\,,
\end{equation} 
the last equality holds when $f\equiv 1$.
Hence, assuming that $f=k_1$, then from (\ref{eq2}) we get
\begin{equation} \label{eq3}
{P_{k_1}({\bs x}) } - k_1 \approx k_0, \;\; \forall \; {\bs x} \in \Omega\,,
\end{equation} 
where we used $k_0$ to describe the new constant of the right hand side.
\begin{prop}
The relation $\approx$ induced by (\ref{eq3}) is an equivalence relation.
\end{prop}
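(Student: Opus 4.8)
The plan is first to turn the informal symbol $\approx$ appearing in (\ref{eq3}) into a genuine binary relation. The natural reading is the following: on the set of real-valued functions defined on $\Omega$, declare $\phi \approx \psi$ if and only if the difference $\phi - \psi$ is a constant function on $\Omega$, i.e.\ there exists some $c \in \R$ with $\phi({\bs x}) - \psi({\bs x}) = c$ for every ${\bs x} \in \Omega$. With this convention, (\ref{eq3}) is literally the assertion that $P_{k_1} - k_1 \approx k_0$, and more generally the relation encodes the idea, used throughout the motivation of the rescaled kernel, that the interpolation error of a constant datum differs from a constant only by an additive constant.

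Once the relation is fixed, I would simply verify the three defining properties. \emph{Reflexivity}: for any $\phi$, the function $\phi - \phi \equiv 0$ is constant, hence $\phi \approx \phi$. \emph{Symmetry}: if $\phi \approx \psi$, say $\phi - \psi \equiv c$, then $\psi - \phi \equiv -c$ is again constant, so $\psi \approx \phi$. \emph{Transitivity}: if $\phi \approx \psi$ and $\psi \approx \chi$, with $\phi - \psi \equiv c_1$ and $\psi - \chi \equiv c_2$, then $\phi - \chi = (\phi - \psi) + (\psi - \chi) \equiv c_1 + c_2$ is constant, so $\phi \approx \chi$. Each step uses only the elementary fact that sums and negatives of constant functions are constant.

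The only real difficulty is not the verification, which is immediate, but choosing the formalization of $\approx$ so that (\ref{eq3}) is a literal instance of it and so that the subsequent step — passing from the additive relation (\ref{eq3}) to the multiplicative normalization in (\ref{rescaled}) — remains consistent. I would therefore devote most of the write-up to justifying the definition and, if useful, to remarking that the quotient set modulo $\approx$ identifies two functions precisely when they agree up to an additive constant, which is the property actually exploited in the construction of the rescaled interpolant.
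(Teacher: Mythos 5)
Your verification is correct, but you formalize $\approx$ differently from the paper. You read $\phi \approx \psi$ as ``$\phi - \psi$ is a constant function on $\Omega$,'' and then check reflexivity, symmetry and transitivity from the fact that sums and negatives of constants are constant. The paper instead reads $f \approx g$ as agreement on the node set, $f(X) = g(X)$, and verifies the three properties directly from equality of the values at the points of $X$. Both relations are trivially equivalence relations, so as a proof of the stated proposition your argument is fine. The difference matters for what comes next, though: the paper immediately uses transitivity to pass from $P_f/f \approx P_{k_1}$ to $P_f/P_{k_1} \approx f$, a step involving multiplication and division by $P_{k_1}$. Pointwise equality at the nodes is preserved under such operations (where $P_{k_1}$ does not vanish), so the paper's node-agreement reading supports this manipulation; your additive-constant reading is not stable under division, so with your formalization that later step would need a separate justification. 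In short, your reading makes (\ref{eq3}) a cleaner literal instance of the relation, while the paper's reading is the one that actually carries the subsequent derivation of the rescaled interpolant.
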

\begin{proof} Let $f,g,h: \Omega \rightarrow \R$ and $X$ be a set of distinct points of $\Omega$.  
\begin{itemize}
\item Reflexivity. Since $f=f$, then $f(\Omega)=f(\Omega)$ implies $f(X)=f(X)$ and so $f \approx f$.
\item Symmetry. Let $f\approx g$, then $f(X)=g(X)$  that can be read from right to left and so 
$g \approx f$.
\item Transitivity. Let $f\approx g$ and $g \approx h$. This means $f(X)=g(X)$ and $g(X)=h(X)$. 
Hence $f(X)=h(X)$, that is $f \approx h$.
\end{itemize}
This concludes the proof. \qed
\end{proof}
 
We can use the transitive property of $\approx$ to combine (\ref{eq2}) and (\ref{eq3}) to get
\begin{eqnarray*}
 {P_{f}({\bs x}) \over f({\bs x})} -k_1 &\approx& {P_{k_1}({\bs x}) } - k_1\\
  {P_{f}({\bs x}) \over f({\bs x})} &\approx&  {P_{k_1}({\bs x}) } \\
  {P_{f}({\bs x}) \over P_{k_1}({\bs x})} &\approx&  f({\bs x})\,.
  \end{eqnarray*}
Therefore, functions of the form $\displaystyle {P_{f}({\bs x}) \over P_{k_1}({\bs x})}$ are 
rescaled interpolants.

In our setting, we can notice that both $P_{f}({\bs x})$ and $P_{g}({\bs x})$ are constructed by 
using the kernel $K$ 
with associate native space ${\cal N}_K$. In order to identify
the native space associated to the rescaled interpolant, we may proceed as follows. Letting
\begin{equation} 
\hat{P}_{f}({\bs x})={P_{f}({\bs x}) \over P_{g}({\bs x}) } \,,
\end{equation}
we may introduce a new kernel, say $K_r$, associated to the rescaled interpolant, from which we 
will characterize the associated native space ${\cal N}_{K_r}$.

Observing that
\begin{equation} \label{rescaled1}
\hat{ {P}}_{f}( \bs{x}) = \frac{ {P}_f( \bs{x})}{ {P}_{g}( \bs{x})} = \sum_{j=1}^{N}c_{j}\frac{K( 
\bs{x}, \bs{x}_{j})}{\sum_{i=1}^{N}d_{i}K( \bs{x}, \bs{x}_{i})}
\end{equation}
and recalling that the denominator is the interpolant of the constant function $g({\bs x})=1$, we 
have 
\begin{equation*}
\hat{ {P}}_{f}( \bs{x}) = \sum_{j=1}^{N}c_{j} \left[ \frac{K( \bs{x}, \bs{x}_{j})}{\displaystyle 
\sum_{i=1}^{N}d_{i}K( \bs{x}, \bs{x}_{i}) \, \sum_{i=1}^{N}d_{i}K( \bs{x}_{j}, \bs{x}_{i})}\right]
\end{equation*}
Denoting $q( \bs{x})=\sum_{i=1}^{N}d_{i}K( \bs{x}, \bs{x}_{i})$, then the square brakets can be 
re-written as the set of functions
\begin{equation*}
\frac{K( \bs{x}, \bs{x}_{j})}{q( \bs{x})\cdot q( \bs{x}_{j})}, \quad j=1,\dots,N
\end{equation*}
which can be interpreted as a {\it new basis} for the rescaled interpolant.

This theorem finds application in our setting.
\begin{theorem}[cf. \cite{A50}]
Let $K : \Omega \times \Omega \rightarrow \R$ be a (strictly) positive definite kernel. 
Let $s: \Omega \rightarrow \R$ be a continuous and nonvanishing function in $\Omega$. Then 
\begin{equation}
K_s({\bs x}, {\bs y})=s({\bs x}) s({\bs y}) K({\bs x}, {\bs y})
\end{equation}
is (strictly) positive definite.
\end{theorem}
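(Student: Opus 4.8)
The plan is to verify the definition of (strict) positive definiteness directly, reducing the quadratic form associated with $K_s$ to the one associated with $K$ by absorbing the factor $s$ into the coefficient vector. First I would recall that symmetry of $K_s$ is immediate: since $K({\bs x},{\bs y})=K({\bs y},{\bs x})$ and scalar multiplication commutes, $K_s({\bs x},{\bs y})=s({\bs x})s({\bs y})K({\bs x},{\bs y})=s({\bs y})s({\bs x})K({\bs y},{\bs x})=K_s({\bs y},{\bs x})$. So only the definiteness of the associated matrices needs work.

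Next, fix an arbitrary set of pairwise distinct points ${\bs x}_1,\dots,{\bs x}_N\in\Omega$ and an arbitrary vector ${\bs c}=(c_1,\dots,c_N)^T\in\R^N$. I would compute the quadratic form
\begin{equation*}
\sum_{i=1}^N\sum_{j=1}^N c_i c_j\, K_s({\bs x}_i,{\bs x}_j)
=\sum_{i=1}^N\sum_{j=1}^N \bigl(c_i s({\bs x}_i)\bigr)\bigl(c_j s({\bs x}_j)\bigr) K({\bs x}_i,{\bs x}_j)
=\sum_{i=1}^N\sum_{j=1}^N b_i b_j\, K({\bs x}_i,{\bs x}_j),
\end{equation*}
where I set $b_i:=c_i\, s({\bs x}_i)$. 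Since $K$ is positive definite, the right-hand side is $\ge 0$, which already gives positive semidefiniteness of $K_s$, hence positive definiteness in the weak sense.

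For the strict statement, the key observation — and the only place where the hypothesis on $s$ is actually used — is that the map ${\bs c}\mapsto{\bs b}$ defined coordinatewise by $b_i=c_i s({\bs x}_i)$ is a bijection of $\R^N$, because $s({\bs x}_i)\neq 0$ for every $i$ (here it is enough that $s$ is nonvanishing at the chosen points; continuity of $s$ is not needed for this algebraic argument, only to keep $K_s$ in the right function class). Consequently, if ${\bs c}\neq {\bs 0}$ then ${\bs b}\neq {\bs 0}$, and by strict positive definiteness of $K$ the sum $\sum_{i,j} b_i b_j K({\bs x}_i,{\bs x}_j)$ is strictly positive; therefore so is $\sum_{i,j} c_i c_j K_s({\bs x}_i,{\bs x}_j)$. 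This establishes that $K_s$ is strictly positive definite and completes the proof.

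I do not expect any genuine obstacle here: the argument is a one-line change of variables in the quadratic form. The only point that deserves a careful sentence is the bijectivity of ${\bs c}\mapsto{\bs b}$, which pins down exactly why "nonvanishing" is the right hypothesis; without it one would only recover the semidefinite conclusion.
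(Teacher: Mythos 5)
Your proof is correct: the change of variables $b_i = c_i s({\bs x}_i)$ in the quadratic form, together with the observation that nonvanishing $s$ makes ${\bs c}\mapsto{\bs b}$ a bijection of $\R^N$, is exactly the standard argument for this result. Note that the paper itself gives no proof — it cites the theorem from Aronszajn \cite{A50} and immediately applies it with $s = 1/P_{g}$ — so there is nothing to compare against beyond saying that your argument is the expected one, and your side remark that continuity of $s$ plays no role in the positive-definiteness claim (only the nonvanishing at the nodes does) is accurate.
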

In fact, letting $P_{g}$ the interpolant of the constant $g \equiv 1$ which is by construction
continuous in $\Omega$, if it is also non-vanishing, then $s=1/P_{g}$. 
But this property follows from the general error estimation (\ref{error}), since we can find a 
set of points $X$ 
so that $\| P_{{g}}({\bs x}) - g\|_\infty < g$
that implies $P_{g}({\bs x}) \ne 0, \; \forall {\bs x} \in \Omega$. 

It follows that we can consider $s=1/P_{g}$, which is continuous, non-vanishing on $\Omega$ and 
consider
the {\it rescaled kernel}
\begin{equation}
K_r({\bs x}, {\bs y})={1 \over P_{{g}}({\bs x}) } { 1 \over P_{g}({\bs y})} K({\bs x}, {\bs y})
\end{equation}
which turns out to be (strictly) positive definite and we will denote its associate native space by 
${\cal N}_{K_r}$. 

This discussion shows that the rescaled kernel $K_r$ is a kernel approximation process which 
is well-posed and preserves the properties of the kernel $K$.
In Appendix \ref{app:native} we propose a preliminary analysis aimed to show 
possible connections between the native spaces of the kernel $K$, ${\cal N}_K$
and that of the rescaled kernel $K_r$, ${\cal N}_{K_r}$.


Moreover, by re-writing the rescaled interpolant as 
 \begin{eqnarray*}
\hat{P}_f({\bs x})&=&\sum_{j=1}^N c_j K_r({\bs x}, {\bs x}_j) \\
&=& \sum_{j=1}^N c_j \left\{ {1 \over \sum_{i=1}^N d_i  K({\bs x}, {\bs x}_i)} \,  {1 \over 
\sum_{i=1}^N d_i  K({\bs x}_i, {\bs x})}\right\}
 { K({\bs x}, {\bs x}_j)} \\
&=& \sum_{j=1}^N c_j  {1 \over \sum_{i=1}^N d_i  K({\bs x}, {\bs x}_i)}  \, K({\bs x}, {\bs x}_j)\,,
 \end{eqnarray*}
since ${1 \over \sum_{i=1}^N d_i  K({\bs x}, {\bs x}_i)} =1\,,\;\forall \, {\bs x} \in \Omega$ is 
the interpolant of the function $g={1}$. This construction allows to prove formally that 
the rescaled interpolant reproduces the constants.
\begin{theorem}
Let $K$ be a strictly positive definite kernel, $f: \Omega \rightarrow \R \backslash \{0\}$ such 
that 
$f({\bs x})=a, \; a \in \R \backslash \{0\}$ and $X=\{x_1,\ldots, x_N\} \subset \Omega$. Then
the associated rescaled interpolant (\ref{rescaled}) (or equivalently (\ref{rescaled1})) 
is such that $\hat{P}_{r,a}({\bs x})=a, \; \;\forall {\bs x}$.
\end{theorem}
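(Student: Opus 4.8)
The plan is to show that when $f \equiv a$ for a nonzero constant $a$, the numerator $P_f$ of the rescaled interpolant is exactly $a$ times the numerator $P_g$ of the constant-one interpolant, so that the quotient collapses to $a$ identically.

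First I would observe that the interpolation problem is linear in the data: the coefficients $\bs{c}$ of $P_f$ solve $A\bs{c} = \bs{f}_X$, while the coefficients $\bs{d}$ of $P_g$ solve $A\bs{d} = \bs{1}$. Since $f \equiv a$ on $\Omega$, we have $\bs{f}_X = a\,\bs{1}$, and by uniqueness of the solution to the (nonsingular, because $K$ is strictly positive definite) linear system, $\bs{c} = a\,\bs{d}$. Consequently
\begin{equation*}
P_f(\bs{x}) = \sum_{i=1}^N c_i K(\bs{x},\bs{x}_i) = a \sum_{i=1}^N d_i K(\bs{x},\bs{x}_i) = a\, P_g(\bs{x}), \quad \forall \bs{x} \in \Omega.
\end{equation*}

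Next I would plug this into the definition \eqref{rescaled} of the rescaled interpolant. Since we are in the setting where $P_g$ is non-vanishing on $\Omega$ (guaranteed, as discussed above, by choosing $X$ so that $\|P_g - g\|_\infty < 1$ via the error estimate \eqref{error}), the quotient is well defined and
\begin{equation*}
\hat{P}_{r,a}(\bs{x}) = \frac{P_f(\bs{x})}{P_g(\bs{x})} = \frac{a\, P_g(\bs{x})}{P_g(\bs{x})} = a, \quad \forall \bs{x} \in \Omega.
\end{equation*}
Alternatively, using the last displayed rewriting of $\hat{P}_f$ before the theorem, $\hat{P}_{r,a}(\bs{x}) = \sum_{j=1}^N c_j \bigl(\sum_{i=1}^N d_i K(\bs{x},\bs{x}_i)\bigr)^{-1} K(\bs{x},\bs{x}_j) = a \cdot P_g(\bs{x})/P_g(\bs{x}) = a$, which uses the same cancellation.

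There is essentially no hard obstacle here: the proof is a one-line consequence of linearity of the interpolation operator plus the non-vanishing of $P_g$. The only point requiring a word of care is the well-definedness of the quotient, i.e.\ ensuring $P_g(\bs{x}) \neq 0$ for all $\bs{x} \in \Omega$; this is exactly the hypothesis already secured in Section~\ref{SubSecRI} when the rescaled kernel $K_r$ was introduced, so I would simply invoke it rather than reprove it. The statement that the rescaled process reproduces constants then follows, mirroring the analogous property of Shepard's method.
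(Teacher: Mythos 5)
Your proof is correct and follows essentially the same route as the paper: both exploit linearity of the (nonsingular) interpolation system applied to constant data to get the interpolant of $a$ as $a$ times the interpolant of $1$, and then cancel in the quotient, with the non-vanishing of $P_g$ securing well-definedness. The only cosmetic difference is that you scale the standard system $A\bs{c}=a\bs{1}$ to obtain $P_f = a\,P_g$ directly, whereas the paper scales the collocation system written in the rescaled basis, $A_r\bs{c}=a\bs{1}$, and concludes $\hat{P}_{r,a}=a\,\hat{P}_g = a\,P_g/P_g = a$ — the underlying argument is identical.
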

\begin{proof} The interpolation conditions give the linear system
$$ A_{r} {\bf c}={\bf a}$$ 
where ${\bf a}=(a,\ldots ,a)^T$ and $A_r$ denotes 
the collocation matrix w.r.t. the rescaled basis of the function $f(x)=a$. The previous system can 
be written as
\begin{eqnarray*}
a \cdot \left({1 \over a} A_r {\bf c}\right) &=& a \cdot {\bf 1}\,,\\
{1 \over a} A_r {\bf c} & =& {\bf 1}\,.
\end{eqnarray*}
where ${\bf 1}=(1,\ldots,1)^T$. Hence, denoting as $\hat{P}_{r,a}$ the rescaled interpolant of the 
constant $a$
\begin{equation}\label{eqqq1} 
\hat{P}_{r,a}(\cdot)=a \cdot \hat{P}_{g}(\cdot) = 
a \cdot {{P}_{g}(\cdot) \over {P}_{g}(\cdot)} = a\,,\end{equation}
as required. \qed
\end{proof}
\vskip 0.1in
Obviously the previous results holds for $a=0$. 
This comes immediately from (\ref{eqqq1}).

\section{Rescaling is equivalent to the Shepard's method} \label{Section3}
We take into account here a different point of view. We firstly compute the cardinal function form 
of the 
interpolatant, then we show the connection with the Shepard's method 
(see e.g. \cite[\S 23.1]{F07}) and provide a stability bound based on the Lebesgue constant. We need 
to recall
the following result (cf e.g. \cite[\S 14.2]{F07} or \cite{WS93}).
\begin{prop}
For any set $X_N=\{x_1, \dots, x_N\}\subset\Omega$ of pairwise distinct points, there exists a 
unique \texttt{cardinal basis}
$U=\{u_j\}_{j=1}^N$ of the $\mbox{span}\{K(\cdot, x), x\in X_N\}$, i.e. a set of functions such 
that 
$u_j(x_i) = \delta_{i j},\;\;1\leq i, j\leq N.$
\end{prop}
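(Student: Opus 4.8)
The plan is to prove existence and uniqueness of the cardinal basis by a standard linear-algebra argument based on the invertibility of the kernel collocation matrix $A$ with entries $A_{ij} = K(x_i, x_j)$.

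First I would recall that, since $K$ is strictly positive definite and the points $x_1, \dots, x_N$ are pairwise distinct, the matrix $A = (K(x_i,x_j))_{i,j=1}^N$ is symmetric positive definite, hence invertible; this is exactly the same fact that guarantees well-posedness of the interpolation system $A\mathbf{c} = \mathbf{f}_X$ mentioned earlier in the excerpt. Any function $v \in \mathrm{span}\{K(\cdot,x_i) : i=1,\dots,N\}$ can be written uniquely as $v(\cdot) = \sum_{k=1}^N \alpha_k K(\cdot, x_k)$, and the condition $v(x_i) = \beta_i$ for prescribed values $\beta_i$ is equivalent to $A\boldsymbol{\alpha} = \boldsymbol{\beta}$.

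Next, to construct $u_j$ I would impose the cardinal conditions $u_j(x_i) = \delta_{ij}$, i.e. solve $A\boldsymbol{\alpha}^{(j)} = e_j$, where $e_j$ is the $j$-th standard basis vector. By invertibility of $A$ this has the unique solution $\boldsymbol{\alpha}^{(j)} = A^{-1} e_j$, which is the $j$-th column of $A^{-1}$; setting $u_j(\cdot) = \sum_{k=1}^N (A^{-1})_{kj}\, K(\cdot, x_k)$ then yields a function in the span satisfying the required interpolation identities. For uniqueness, if $\tilde u_j$ were another element of the span with $\tilde u_j(x_i) = \delta_{ij}$, then $u_j - \tilde u_j$ lies in the span and vanishes at all of $X_N$; writing it as $\sum_k \gamma_k K(\cdot,x_k)$, the vanishing conditions give $A\boldsymbol{\gamma} = 0$, so $\boldsymbol{\gamma} = 0$ and $u_j = \tilde u_j$. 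Finally, one checks that $\{u_j\}_{j=1}^N$ is a basis: it has $N$ elements inside the $N$-dimensional space $\mathrm{span}\{K(\cdot,x_i)\}$, and it is linearly independent because evaluating a relation $\sum_j \lambda_j u_j \equiv 0$ at each $x_i$ forces $\lambda_i = 0$.

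There is no real obstacle here — the argument is entirely routine once one observes that the cardinal basis is nothing but the image of the standard kernel basis under the change of coordinates given by $A^{-1}$. The only point requiring a word of care is the implicit claim that $\mathrm{span}\{K(\cdot,x_i)\}$ has dimension exactly $N$ (equivalently, that the translates $K(\cdot,x_1), \dots, K(\cdot,x_N)$ are linearly independent), which again follows from strict positive definiteness of $K$ at distinct points.
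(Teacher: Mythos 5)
Your argument is correct and complete: strict positive definiteness of $K$ at distinct points makes the collocation matrix $A$ invertible, the columns of $A^{-1}$ give the cardinal functions $u_j$, and uniqueness plus linear independence follow by evaluating at the nodes. The paper does not prove this proposition itself but only recalls it from the cited references (Fasshauer \S 14.2, Wu--Schaback), and your proof is precisely the standard argument used there, so there is nothing to add.
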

Using the basis $U$, the standard interpolant of a function $f\in\cal H$ can be written in the form
$P_{f} = \sum_{j=1}^N f(x_j) u_j, $
and the interpolant of the function $g\equiv 1$ reads as $P_{g}  = \sum_{j=1}^N u_j$. The rescaled 
interpolant of $f$ then takes the form
$$
\hat P_{f} = \frac{\sum_{j=1}^N f(x_j) u_j}{\sum_{k=1}^N u_k} = \sum_{j=1}^N f(x_j) 
\frac{u_j}{\sum_{k=1}^N u_k} =: \sum_{j=1}^N f(x_j)
\hat{u}_j,
$$
where we introduced the $N$ functions $\hat{u}_j :=u_j/\left(\sum_{k=1}^N u_k\right)$. These 
functions are still cardinal functions, since $\hat{u}_j(x_i) =
\delta_{ij}$, but they do not belong to the subspace $\mbox{span}\{K(\cdot, x), x\in X_N\}$, in 
general. But they form a partition
of unity, in the sense that, for all $x\in\Omega$, we have
$$
\sum_{j=1}^N \hat{u}_j(x) = \sum_{j=1}^N\frac{u_j(x)}{\sum_{k=1}^N u_k(x)} = 
\frac{\sum_{j=1}^Nu_j(x)}{\sum_{k=1}^N u_k(x)}= 1.
$$
This construction proves the following result.
\begin{prop}
The rescaled interpolation method is a Shepard's method, where the weight functions are defined as 
$\hat{u}_j =u_j/\left(\sum_{k=1}^N
u_k\right)$, $\{u_j\}_j$ being the cardinal basis of $\mbox{span}\{K(\cdot, x), x\in X \}$.
\end{prop}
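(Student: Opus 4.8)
The plan is to unwind the definition on both sides and check that the rescaled interpolant satisfies the two defining properties of a Shepard approximant. First I would recall that Shepard's method associated to the nodes $X=\{x_1,\dots,x_N\}$ produces, from the data values $f(x_1),\dots,f(x_N)$, the approximant $\sum_{j=1}^N f(x_j)\, w_j$, where the weight functions $w_j$ are only required to be cardinal, $w_j(x_i)=\delta_{ij}$, and to form a partition of unity, $\sum_{j=1}^N w_j\equiv 1$ on $\Omega$. Hence it suffices to exhibit functions $\hat u_j$ with these two properties and to show that $\hat P_{f}$ has the corresponding form $\sum_{j=1}^N f(x_j)\hat u_j$.

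Next I would invoke the cardinal basis $U=\{u_j\}_{j=1}^N$ of $\mbox{span}\{K(\cdot,x):x\in X\}$ guaranteed by the preceding proposition, and rewrite numerator and denominator of the rescaled interpolant in cardinal form: $P_{f}=\sum_{j=1}^N f(x_j)u_j$ and $P_{g}=\sum_{k=1}^N u_k$, the latter because $g\equiv 1$ forces $g(x_k)=1$ for every $k$. Since we have already argued (via the error estimate (\ref{error}), choosing $X$ so that $\|P_{g}-1\|_\infty<1$) that $P_{g}$ is nonvanishing on $\Omega$, the quotient $\hat u_j:=u_j/\sum_{k=1}^N u_k$ is well defined and continuous on $\Omega$, and
$$\hat P_{f}=\frac{P_{f}}{P_{g}}=\frac{\sum_{j=1}^N f(x_j)u_j}{\sum_{k=1}^N u_k}=\sum_{j=1}^N f(x_j)\,\hat u_j .$$

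Then I would verify the two required properties of $\hat u_j$. For cardinality, evaluating at a node $x_i$ the denominator is $\sum_{k=1}^N u_k(x_i)=\sum_{k=1}^N \delta_{ki}=1$, so $\hat u_j(x_i)=u_j(x_i)=\delta_{ij}$. For the partition-of-unity property, for every $x\in\Omega$ one has $\sum_{j=1}^N \hat u_j(x)=\bigl(\sum_{j=1}^N u_j(x)\bigr)/\bigl(\sum_{k=1}^N u_k(x)\bigr)=1$, the denominator being nonzero precisely because $P_{g}$ does not vanish. Consequently the $\hat u_j$ are admissible Shepard weight functions and $\hat P_{f}=\sum_{j=1}^N f(x_j)\hat u_j$ is exactly the Shepard approximant they generate, which is the assertion.

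I do not expect a genuine obstacle: the content is the algebraic rearrangement above together with the already-established non-vanishing of $P_{g}$. The only point deserving a word of care is that the $\hat u_j$, unlike the $u_j$, need not lie in $\mbox{span}\{K(\cdot,x):x\in X\}$; but this is immaterial for being a Shepard method, whose weights are by definition allowed to be arbitrary cardinal, partition-of-unity functions. It is also worth noting in passing that the reproduction of constants, already proved, is the expected by-product, since Shepard's method reproduces constants automatically once the weights sum to one.
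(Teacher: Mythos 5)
Your proposal is correct and follows essentially the same route as the paper: rewrite $P_{f}$ and $P_{g}$ in the cardinal basis, form $\hat u_j = u_j/\sum_{k=1}^N u_k$, and verify that these are cardinal, partition-of-unity weights generating $\hat P_{f}=\sum_{j=1}^N f(x_j)\hat u_j$. Your extra remarks (non-vanishing of $P_{g}$ and that the $\hat u_j$ leave the span of the kernel translates) match the paper's own observations.
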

{\bf Remarks}
\begin{itemize}
\item Looking at Figures \ref{Figure2} and \ref{Figure2_1}, we notice the typical ``flat-spot'' 
behaviour of the Shepard's approximation.
\item Although this connection allows to relate our work with other existing methods, we remark that 
there are some limitations in the present
approach. On one hand, the Shepard's method in its original formulation, is able to reproduce 
constant functions. On the other hand,
the main reason to consider Shepard's methods relies on the easy computation of the weight functions 
$\{\hat{u}_j\}_j$, which are usually
constructed by solving a small linear system instead of a full interpolation problem. In our case,
instead, the computation of the weights requires the computation of the cardinal basis, that is the 
solution of 
the full interpolation problem.
\end{itemize}
From this construction we can easily derive stability bounds for the rescaled interpolation 
process. 
In fact, as happens in polynomial interpolation by using the cardinal functions, we can define the 
Lebesgue function 
$
\Lambda_N(x) := \sum_{j=1} ^N |u_j(x)|,
$
and its maximum over $\Omega$, 
$
\lambda_N := \|\Lambda_N\|_{\infty}.
$
that is the {\it Lebesgue constant} which controls the stability of the interpolation process. In 
fact, for
any $x\in\Omega$ 
$$
|P_{f} (x)| = \left|\sum_{j=1}^N f(x_j) u_j(x)\right|\leq\left(\sum_{j=1}^N |u_j(x)|\right)  
\|f\|_{\infty, X} \le
\lambda_N \, \|f\|_{\infty, X}.
$$
Extending the setting to our case, and by using the rescaled cardinal functions $\{\hat{u}_j\}_j$ 
instead of the classical cardinals, we can write
$$
\hat\Lambda_N(x) := \sum_{j=1} ^N |\hat{u}_j(x)|,\;\;\hat\lambda_N := \|\hat\Lambda_N\|_{\infty},
$$
which gives the stability bound
$$
\|\hat P_{f}\|_{\infty} \le \hat\lambda_N  \|f\|_{\infty, X}.
$$
Hence, to quantify the stability gain of the rescaled interpolation process over the standard one, 
we can simply compare the behavior of
$\hat\lambda_N$ and $\lambda_N$. Numerical experiments showing this comparison 
are presented in Section \ref{NE}.

\section{Application to PUM} \label{Section4}
Given the domain $\Omega\subset \R^d$, we consider its partition $\{ \Omega_k \subset \Omega, \; 
k=1,\ldots, n\}$ 
with $\Omega_k$ that possibly overlap,
such that $\Omega \subseteq \cup_{k=1}^n \Omega_k$. We then consider 
compactly supported functions $w_k$ with ${\tt supp}(w_k)\subseteq \Omega_k$, forming a partition of 
$\Omega$, that is 
\begin{equation} \label{PoU} \sum_{k=1}^n w_k({\bs x})=1\,,\;\; \forall {\bs x} \in 
\Omega\,.\end{equation}
Then we construct of a {\it local interpolant}, $p_k$, in RBF form
\begin{equation}\label{LocalApprox} p_k({\bs x}; X_k) =\sum_{j=1}^{n_k} c_j^{(k)} \Phi_j^{(k)}({\bs 
x})\,,\end{equation}
where $X_k$ is a set of distinct points of $\Omega_k$ having $n_k=|X_k|$ as its cardinality and 
$\Phi^{(k)}$ 
the RBF kernel at $\Omega_k$. The global interpolant on $\Omega$ can be written as
\begin{equation} \label{GPUM}
P_{f}({x})=\sum_{k=1}^n p_k({\bs x}; X_k) w_k({\bs x}), \; \;{\bs x} \in \Omega\,.  
\end{equation}
If the local fit interpolates at a given data points, 
that is $p_k({x}_l)=f({ x}_l)$, then thanks to the partition of unity property (\ref{PoU}) 
we can conclude that the global fit is also interpolating at the same point
$$ P_{f}({\bf x}_l)=\sum_{k=1}^n p_k({\bs x}_l; X_k) w_k({\bs x}_l)= \sum_{k=1}^n f({\bs x}_l) 
w_k({\bs x}_l)=f({\bs x}_l)\,.$$
We can apply the rescaled interpolant to this framework as follows
\begin{itemize}
 \item by applying the rescaling to the global interpolant (\ref{GPUM});
 \item or by applying the rescaling to every local interpolant (\ref{LocalApprox}).
\end{itemize}
The first approach is equivalent to apply the PUM for interpolating the constant function 1. 
Hence, it makes sense to rescale {\it every} local interpolant.
The application of the rescaling to every local interpolant of the form (\ref{LocalApprox}) 
gives a global rescaled interpolant of the form
\begin{equation}
 P_f({\bs x})=\sum_{k=1}^n \hat{R}_k({\bs x}; X_k) w_k({\bs x}), \; \;{\bs x} \in \Omega\,.  
\end{equation}
with
$$ \hat{R}_k({\bs x}; X_k)= \sum_{j=1}^{n_k} c_j^{(k)}{ \Phi_j^{(k)}({\bs x}) \over P_1^{(k)}({\bs 
x})} = 
\sum_{j=1}^{n_k} c_j^{(k)}{ \Phi_j^{(k)}({\bs x}) \over \sum_{l=1}^{n_k} d_l^{(k)} \Phi_l^{(k)}({\bs 
x}) }\,,
$$
where the coefficients $d_l^{(k)}$ are chosen so that $\sum_{l=1}^{n_k} d_l^{(k)} \Phi_l^{(k)}({\bs 
x}) =1, \; \; \forall {\bs x} \in X_k\,.$


\section{Numerical examples} \label{NE}
The examples here presented aim to support the theoretical aspects so far analyzed and to show the 
performance of the method compared with the classical approach and the {\it variably scaled} 
approach. 
\subsection{Comparison of the standard and rescaled Lebesgue functions}
In these experiments we compare the standard Lebesgue function with that of the rescaled one. 
Since we need to directly compute the cardinal functions, that is a seriously unstable operation, we 
try to keep the example 
as simple as possible to avoid comparing the effect of the ill-conditioning. 
To this end, we work in $\Omega=[-1, 1]\subset \R$ and with a small number of fixed points. 
We use the Gaussian kernel (global and ${\cal C}^\infty$) and the Wendland W2 kernel, 
which is compactly supported and ${\cal C}^2$. The computation of the standard and rescaled Lebesgue 
functions is been
repeated for $\varepsilon = 0.5, \,1,\, 4,\, 8$ (Gaussian kernel) and $\varepsilon = 0.5,\, 1,\, 2, 
\, 4$ (Wendland kernel), as shown
in Figures \ref{fig:GaussLeb} and \ref{fig:WenLeb}. In the latter case the behavior of the Lebesgue 
function does not change for bigger values of the shape parameter that is why we stopped at 
$\varepsilon=4$. 

\begin{figure}[!h]
\centering
\begin{tabular}{cc}
\includegraphics[width=0.45 \textwidth]{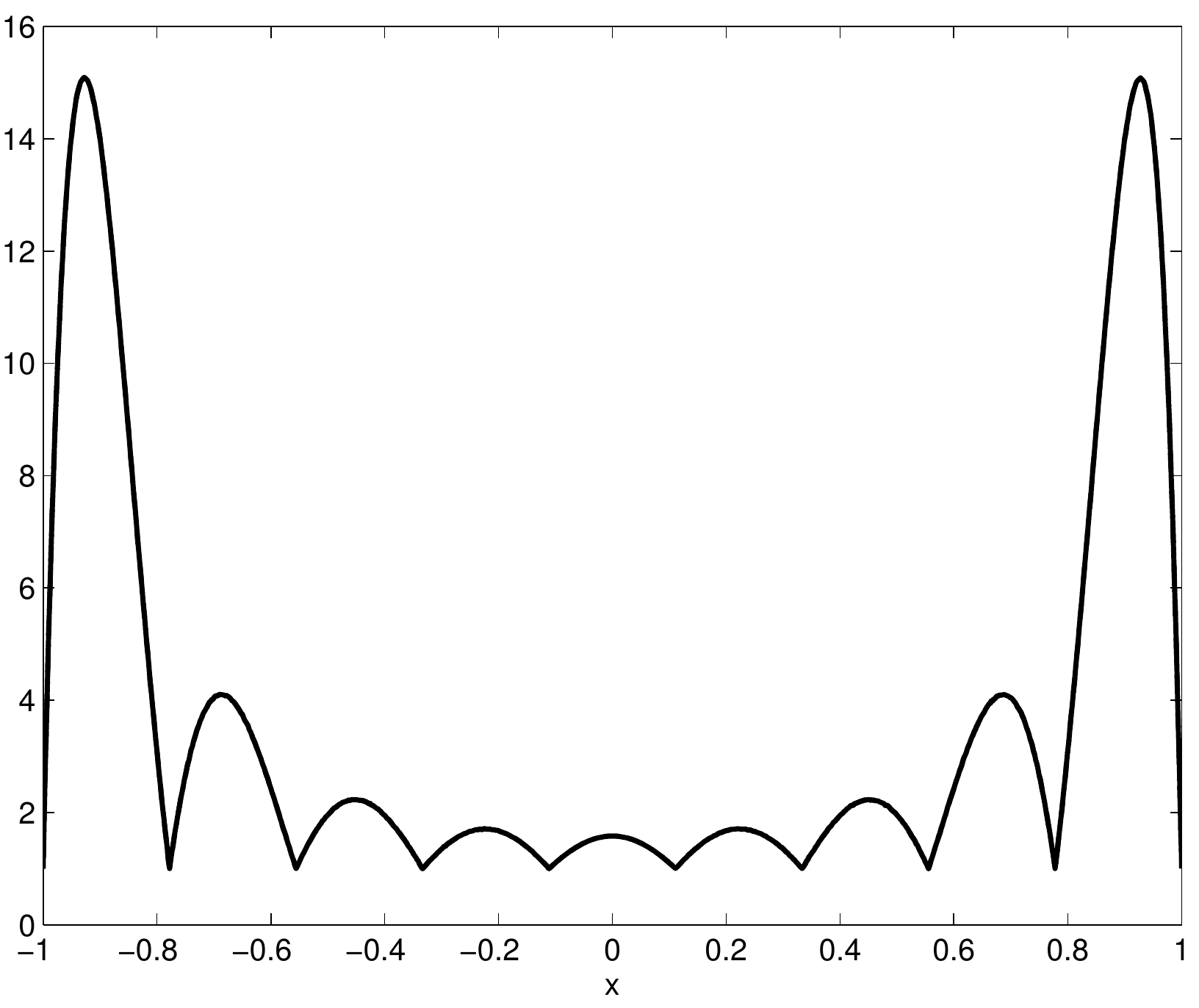}&
\includegraphics[width=0.45 \textwidth]{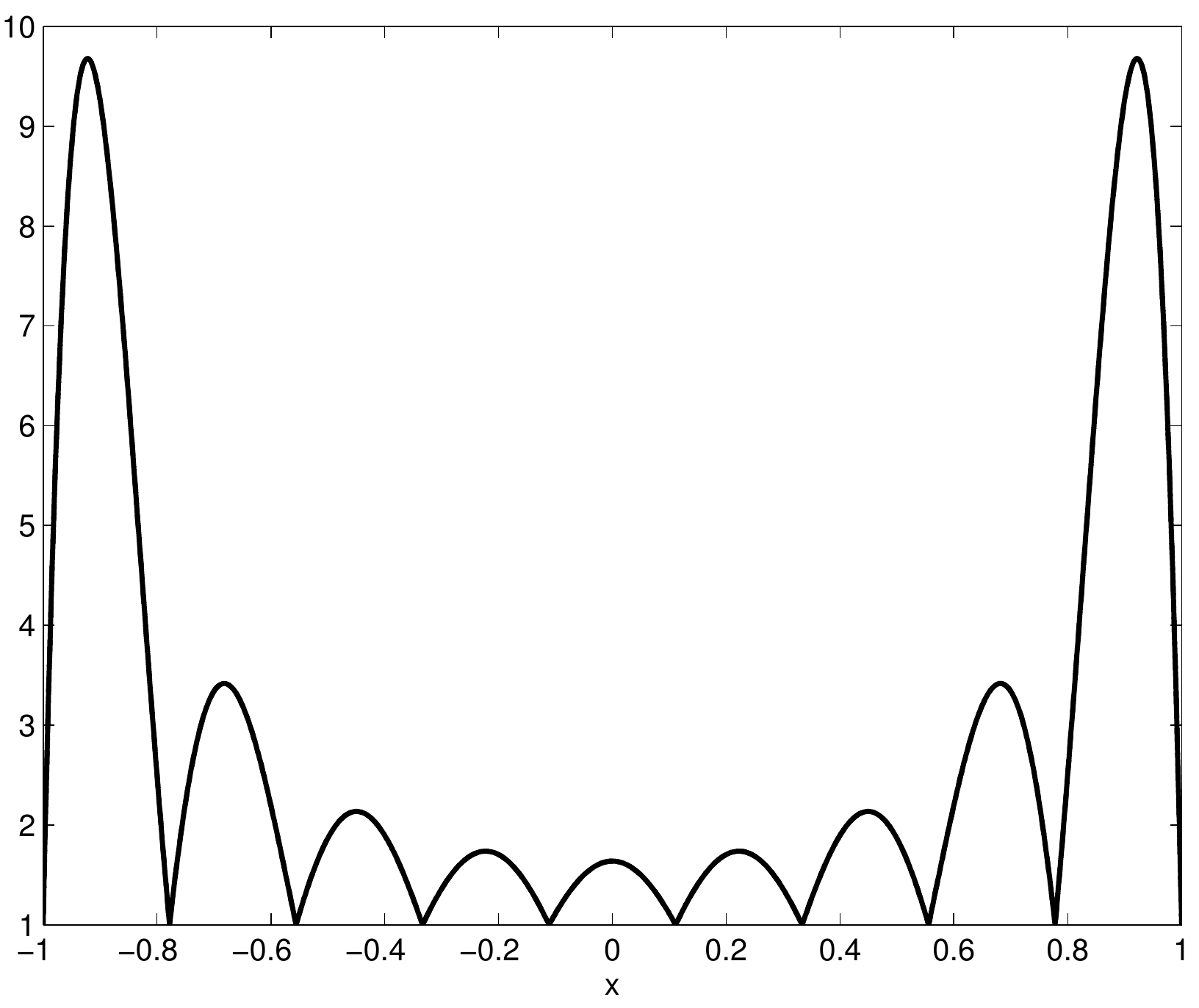}\\
\includegraphics[width=0.45 \textwidth]{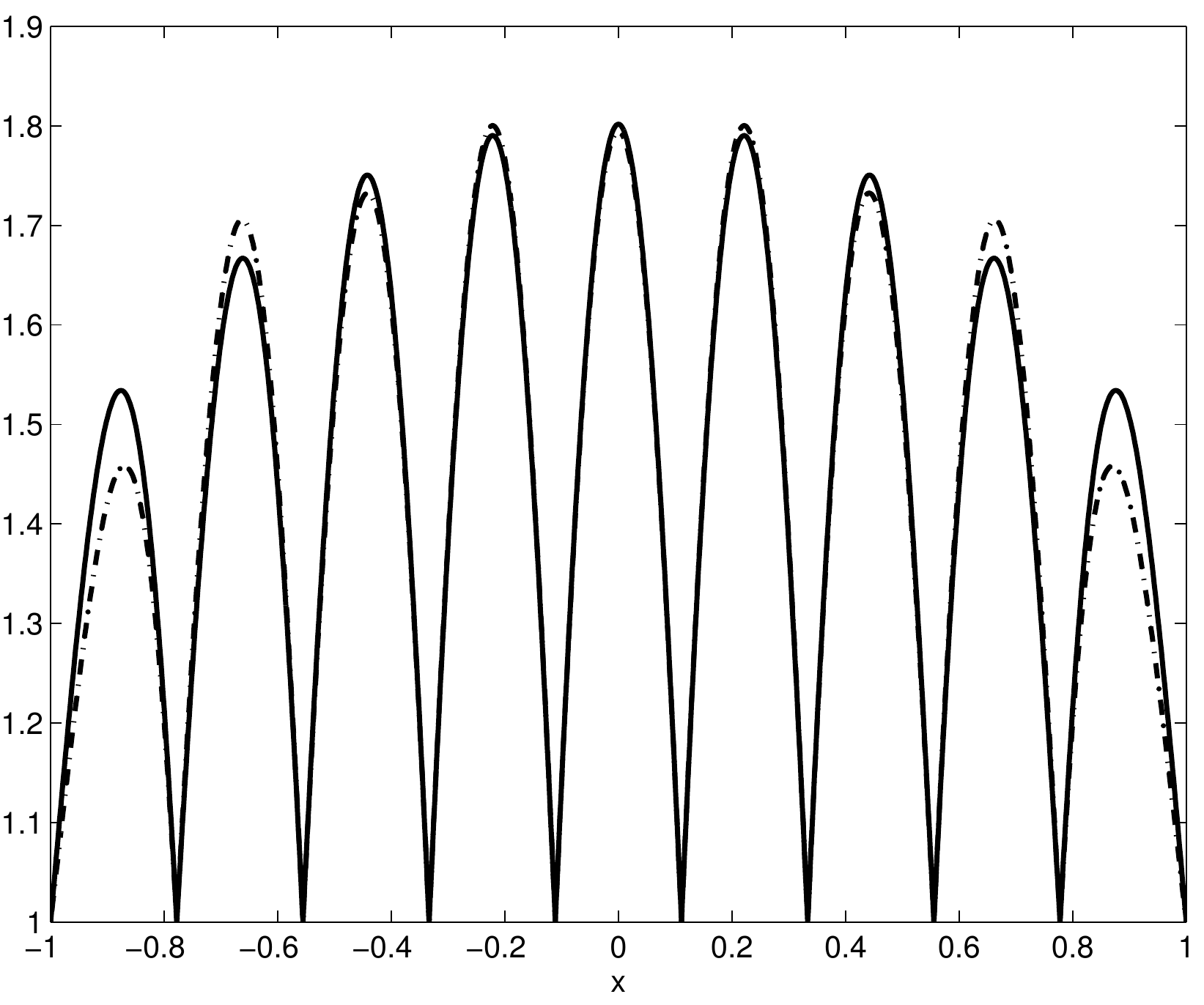}&
\includegraphics[width=0.45 \textwidth]{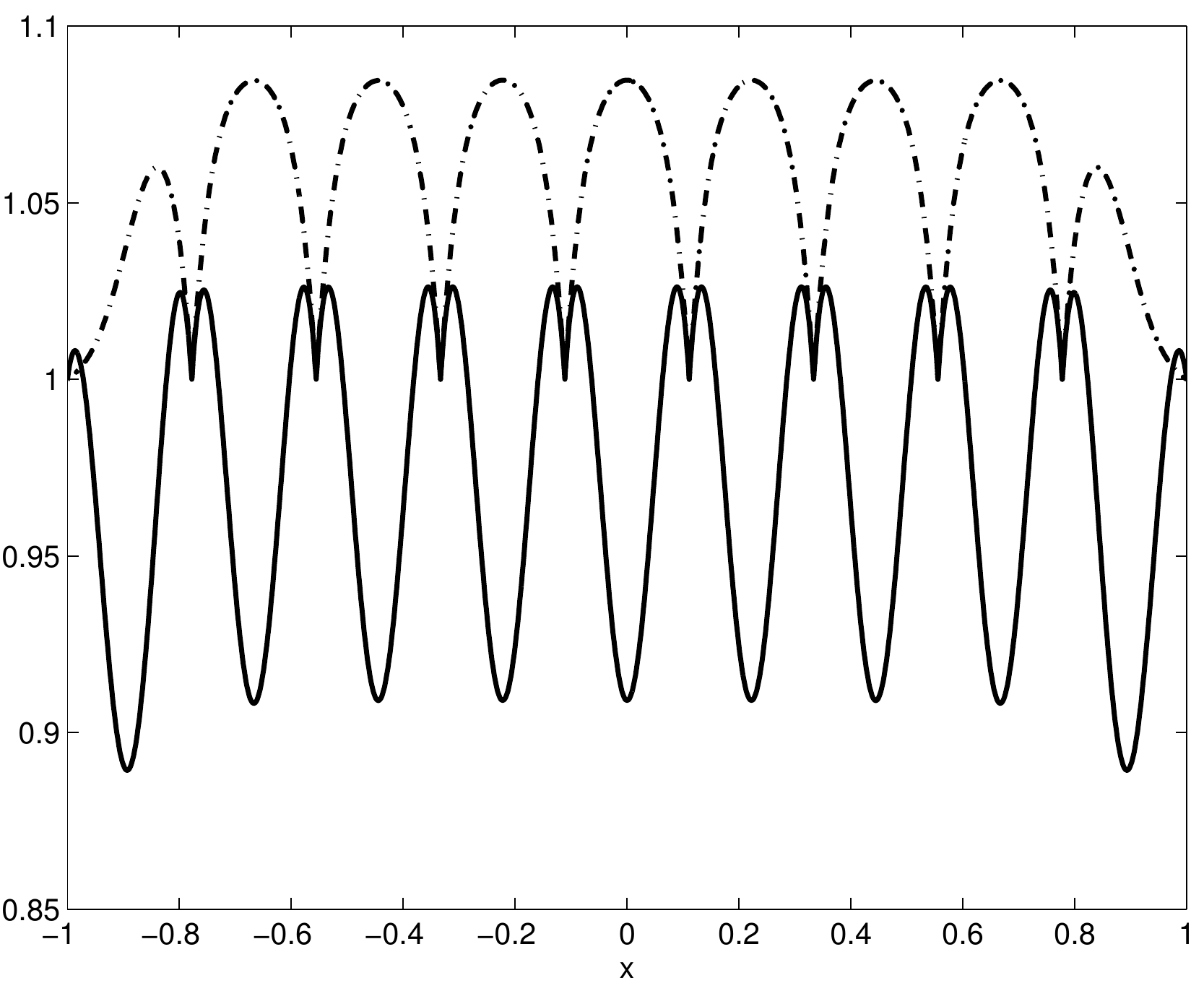}
\end{tabular}
\caption{Comparison between the standard Lebesgue function (solid line) and the rescaled 
Lebesgue function (dotted line) for the Gaussian kernel on $10$ equally spaced points of $[-1,1]$ 
with different $\varepsilon$. 
From top left to bottom right, $\varepsilon = 0.5, \,1,\, 4,\, 8$.}\label{fig:GaussLeb}
\end{figure}

\begin{figure}[!h]
\centering
\begin{tabular}{cc}
\includegraphics[width=0.45 \textwidth]{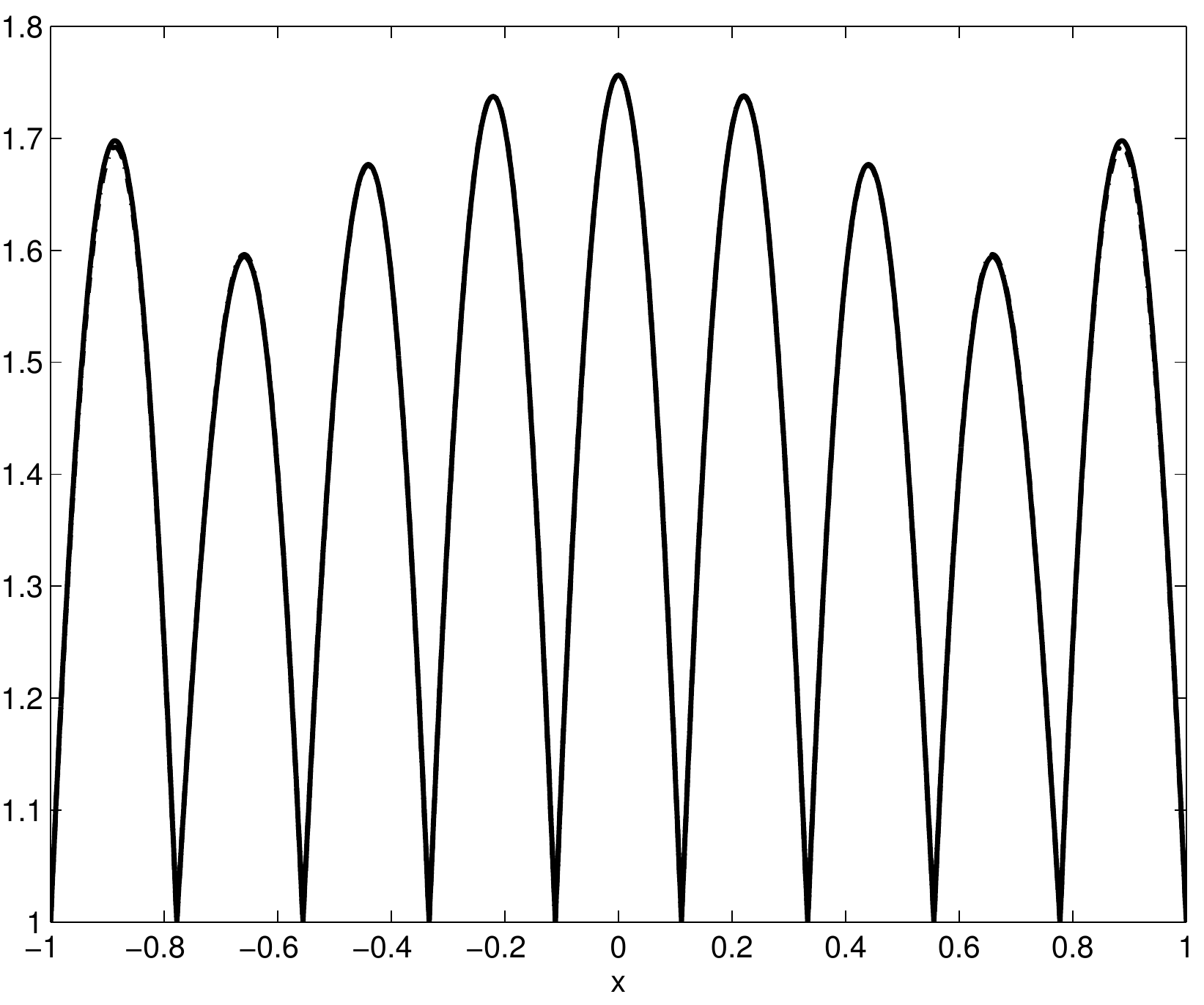}&
\includegraphics[width=0.45 \textwidth]{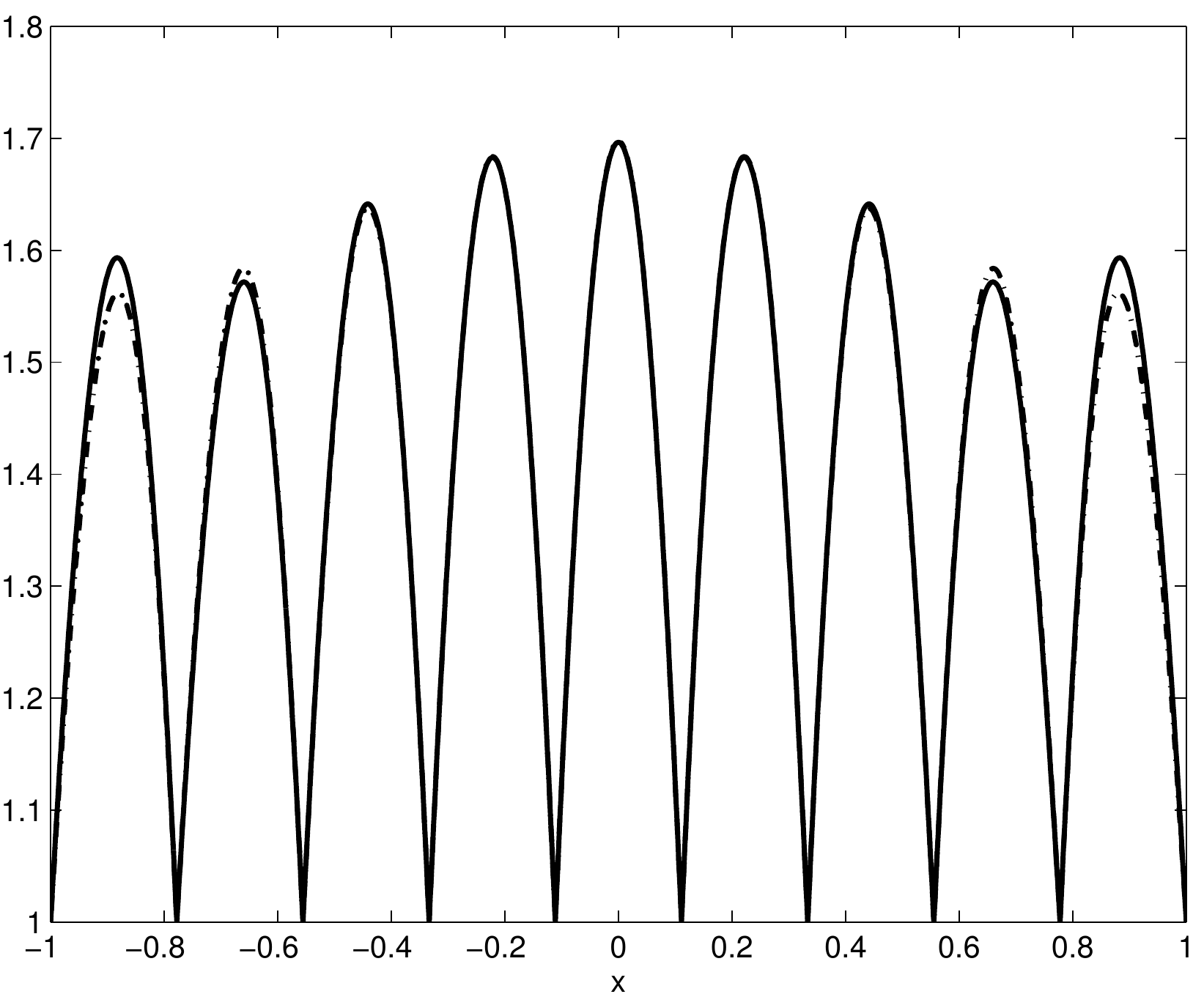}\\
\includegraphics[width=0.45 \textwidth]{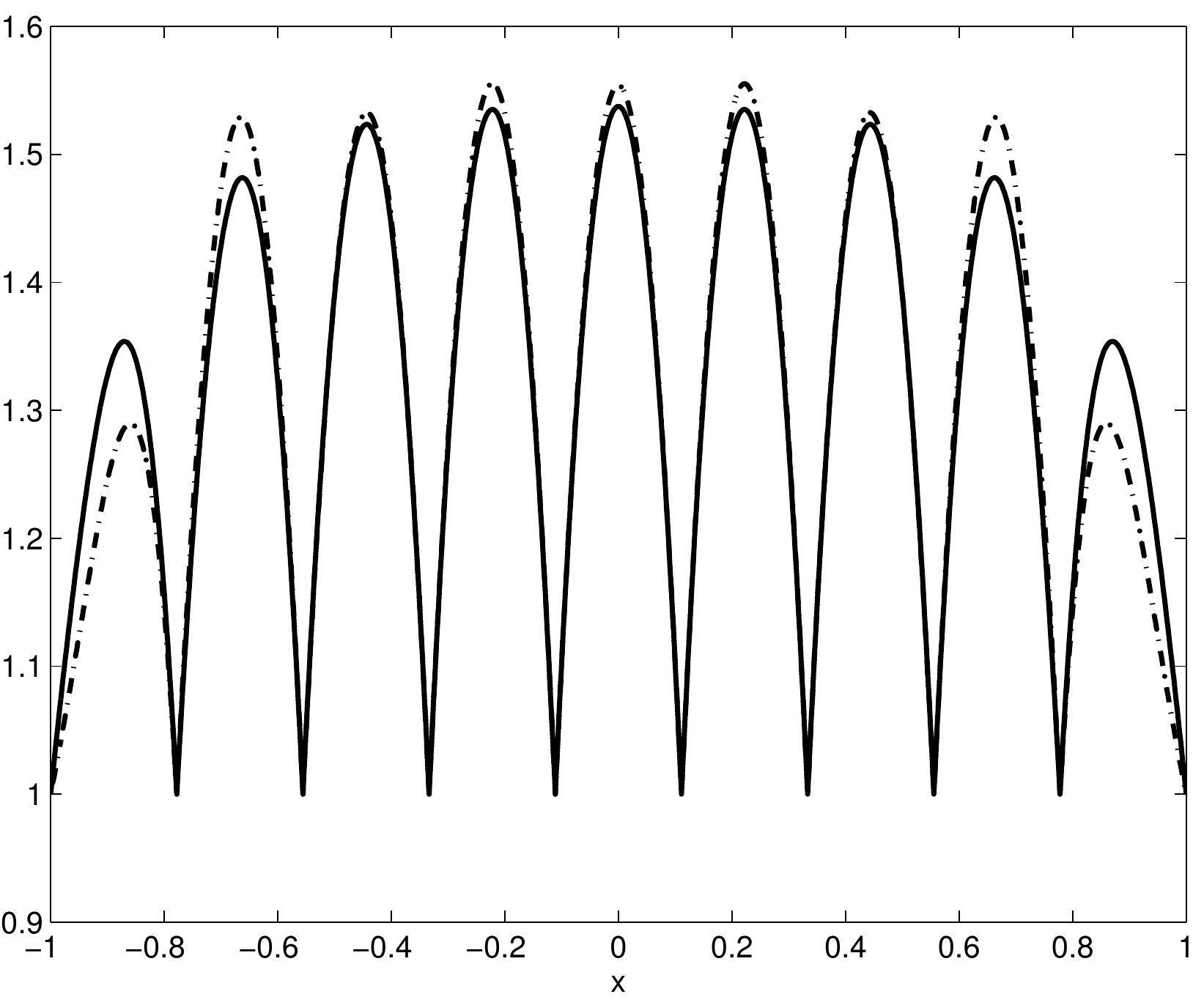}&
\includegraphics[width=0.45 \textwidth]{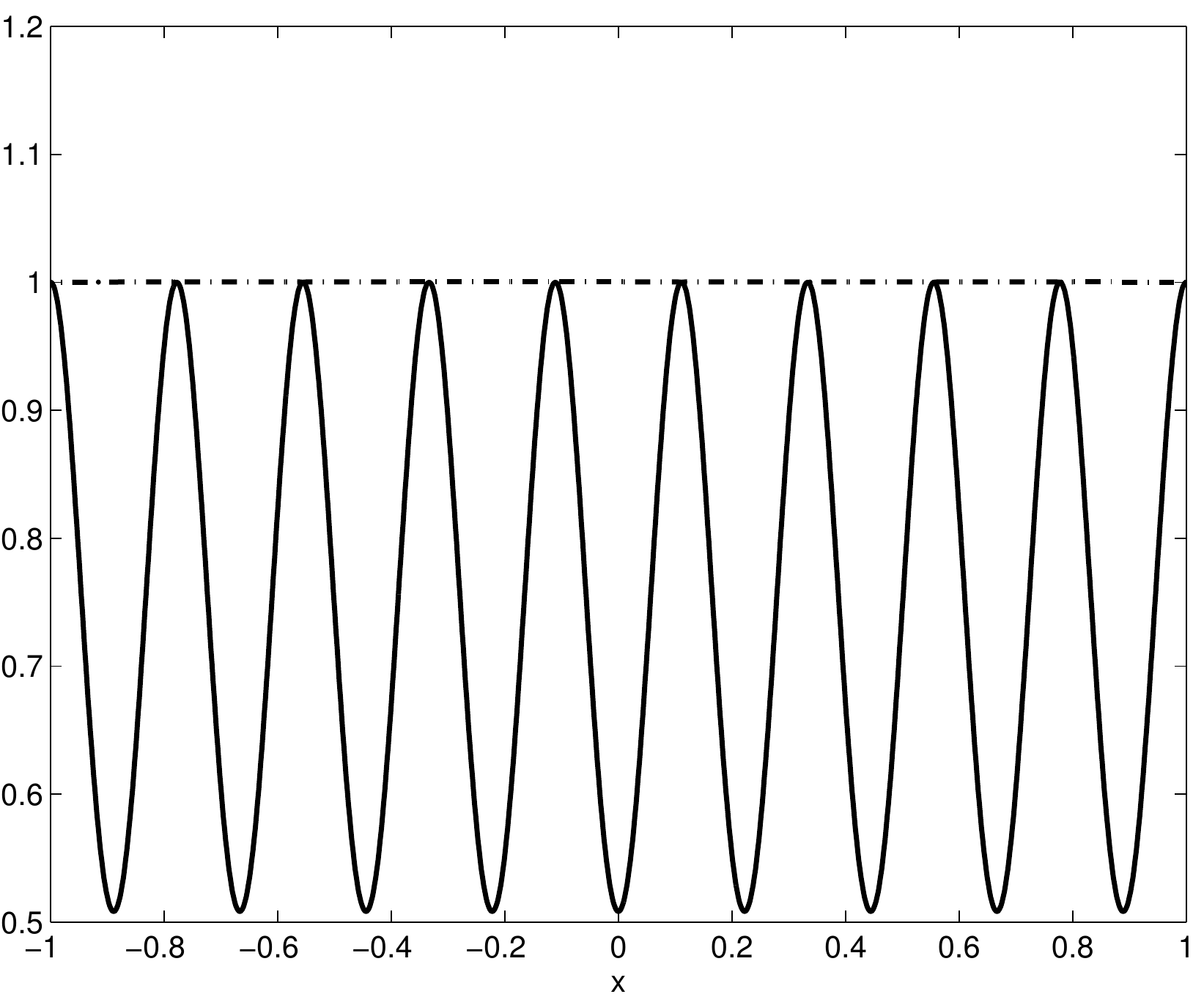}
\end{tabular}
\caption{Comparison between the standard Lebesgue function (solid line) and the 
rescaled Lebesgue function (dotted line) for the $\mathcal C^2$ Wendland kernel on $10$ equally 
spaced points of $[-1,1]$ with different $\varepsilon$. 
From top left to bottom right, $\varepsilon = 0.5, \,1,\, 2,\, 4$.}\label{fig:WenLeb}
\end{figure}

Similar behaviour can be obseverd in the two dimensional setting. In Figure \ref{fig:WenLeb2d_card} 
we show the
comparison between the Lebesgue functions for the Wendand function with standard cardinal functions 
and the rescaled ones on the
cardiod contained in $[-1,1]^2$. 

\begin{figure}[t]
\begin{tabular}{cc}
\includegraphics[scale=0.32]{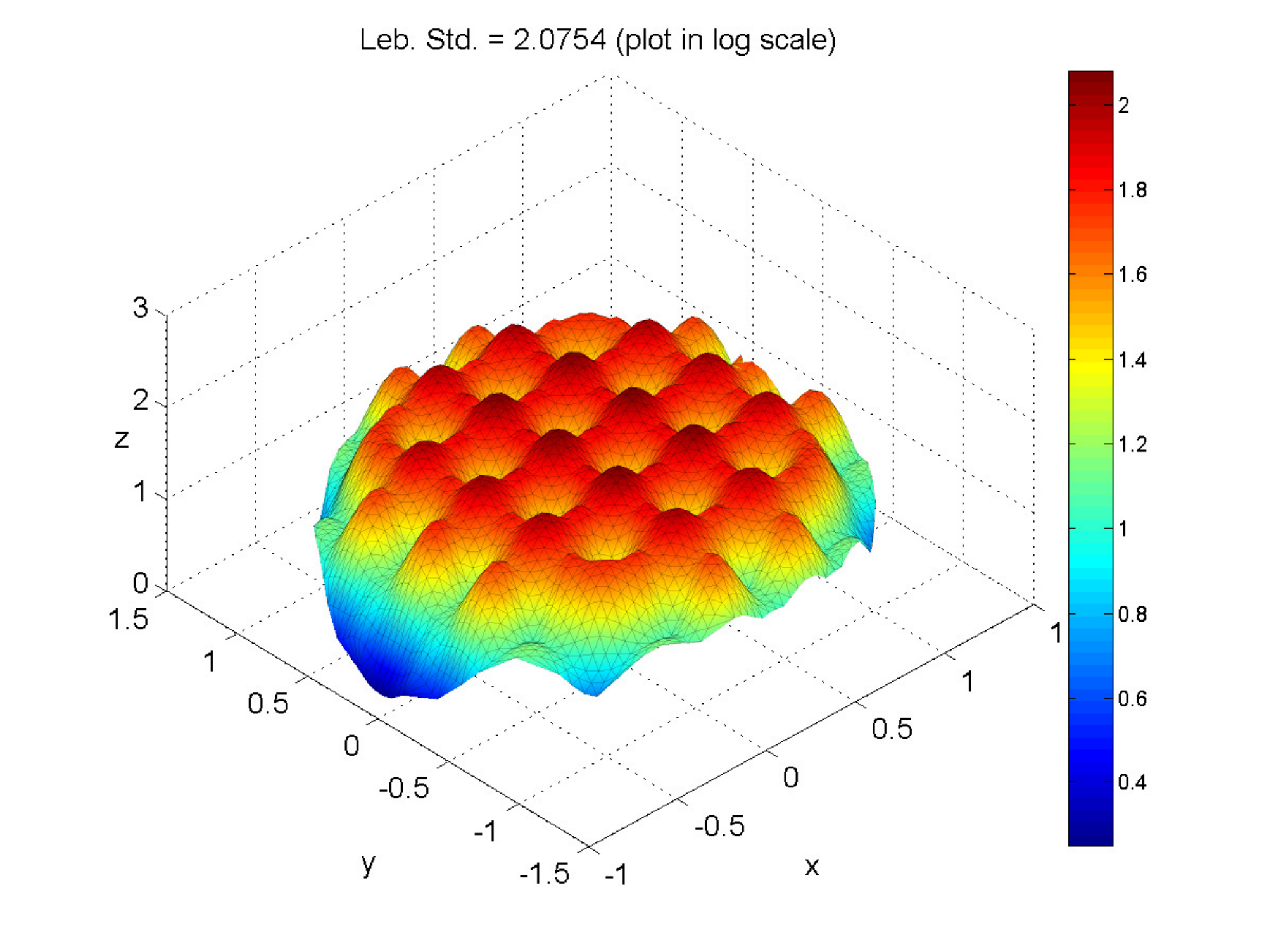} &
\includegraphics[scale=0.32]{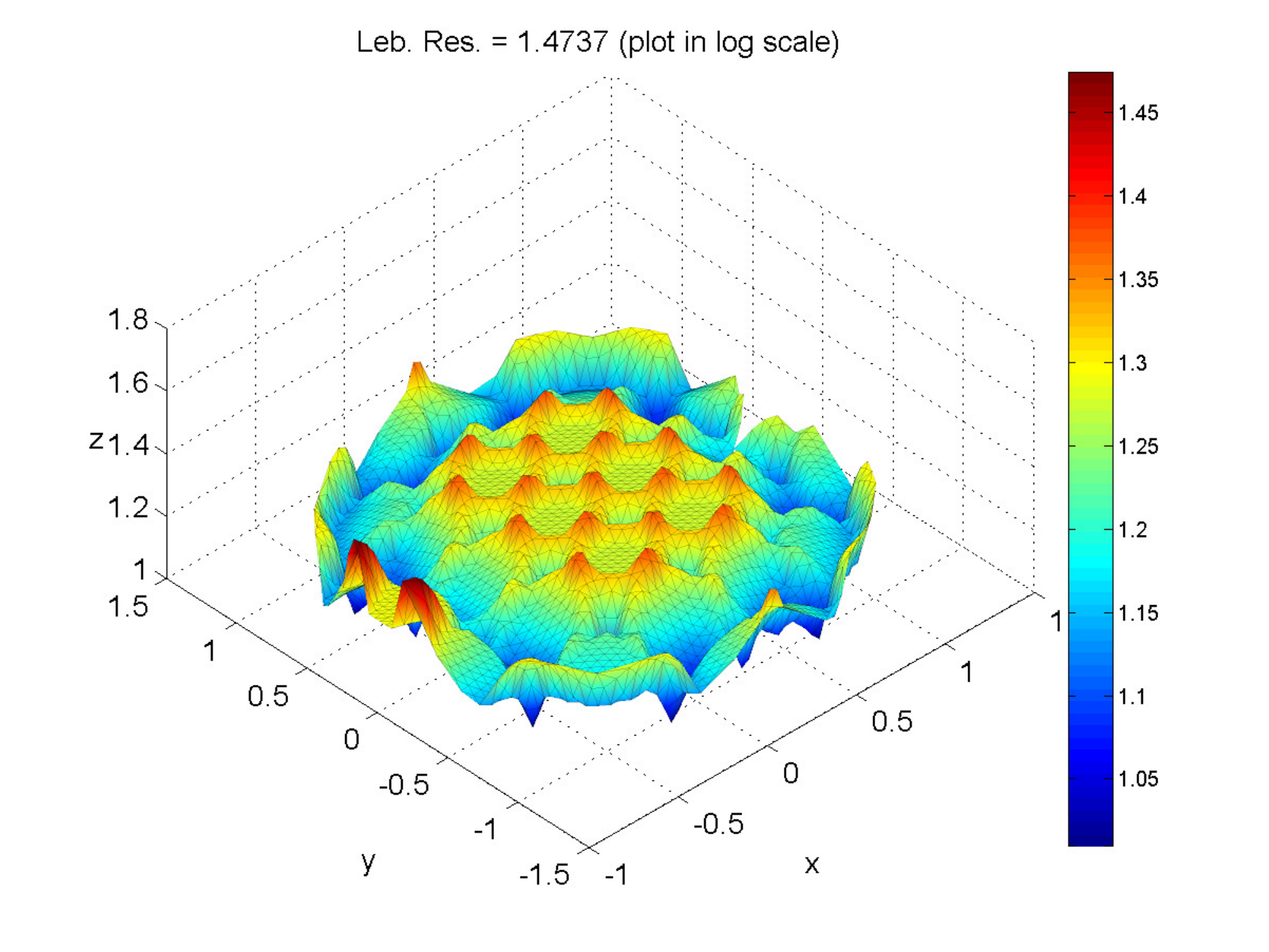}
\end{tabular}
\caption{Comparison between the Lebesgue function with standard basis (Left) and the 
rescaled one (right) for the $\mathcal C^2$ Wendland kernel on the cardiod with $\varepsilon = 
3$.}\label{fig:WenLeb2d_card}
\end{figure}

\begin{figure}[t]
\begin{tabular}{cc}
\includegraphics[scale=0.32]{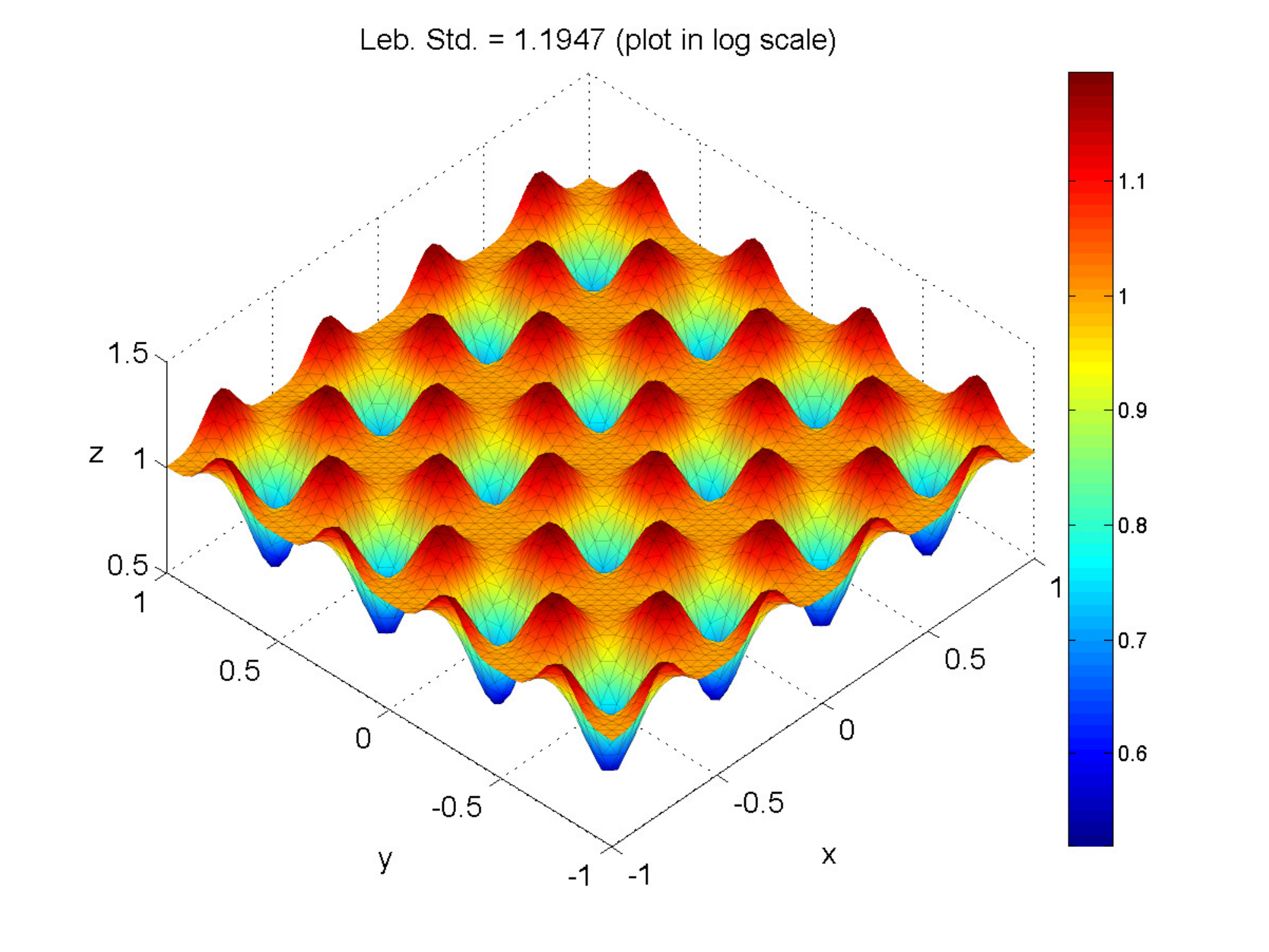} &
\includegraphics[scale=0.32]{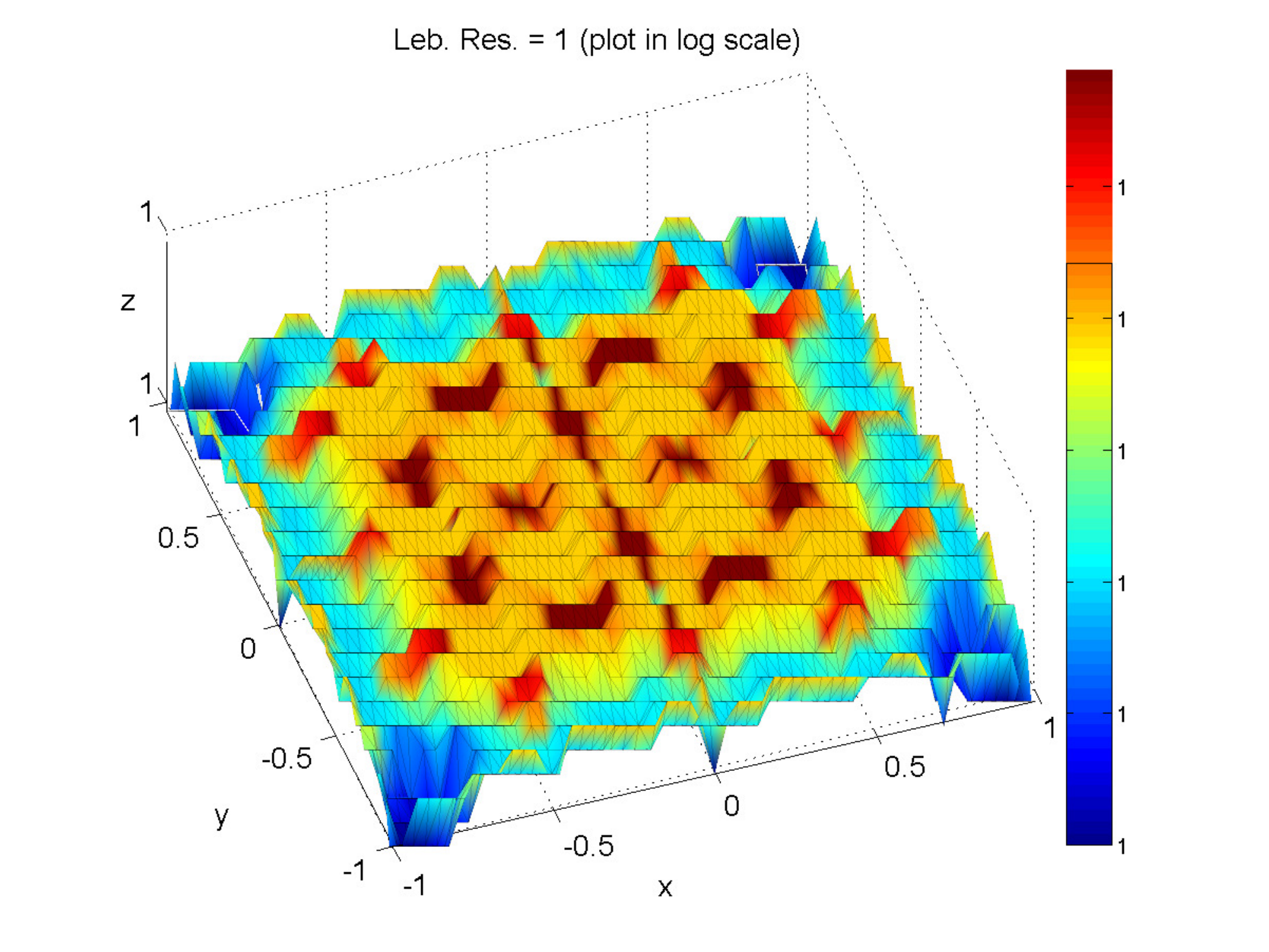}
\end{tabular}
\caption{Comparison between the Lebesgue function with standard basis (Left) and the 
rescaled one (right) for the $\mathcal C^2$ Wendland kernel on the square with $\varepsilon = 
3.85$.}\label{fig:WenLeb2d_square}
\end{figure}

In Figure \ref{fig:WenLeb2d_square} we did a similar test on the square. 
We have chosen $\varepsilon=3.85$ because we wanted to show that bigger values of $\varepsilon$ are 
meaningless. 
The reason of this relies in the support of the Wendland function which is $1/\varepsilon$. In the 
example, we have taken $25$ 
equally spaced points on the square, so that for values of $\varepsilon$ bigger than 
$\varepsilon_M=2$ the cardinal functions have disjoint support. 
Therefore for values of $\varepsilon \ge 2 \varepsilon_M$ we can not
assume that the interpolant of the function $1$ is always not vanishing since some points of the 
domain fall outside the support 
of the cardinal functions (giving a value $0$ of the interpolant). This explains also why in the one 
dimensional case the cardinal functions
with $\varepsilon=4$ give a Lebesgue function identically equal to $1$.


\subsection{Rescaled PUM, accurate PUM and Variably Scaled interpolation}
Here we provide a comparison of the rescaled interpolation with the partition of unity approach  
with the {\it variably scaled} kernel interpolation discussed in \cite{BLRS15}. 

Concerning the application to the PUM, we used the {\it block-based} partition of unity algorithm 
presented in \cite{CDeRP16}, 
that leads to a faster evaluation of the interpolant. Moreover we computed the weight functions
\begin{equation}
w_j({\bs x})={K({\bs x},{\bs  x}_j) \over \sum_{k=1}^N K({\bs x},{\bs  x}_k)}
\end{equation}
where as usual, $K(\cdot,{\bs x}_j)$ is the radial kernel considered.

The first experiment considers the 2d {\it Askley's test function} (well-known for testing 
optimization algorithms \cite{R15})
\begin{equation}
f(x,y)=-20 \;e^{-0.2 \sqrt{0.5(x^2+y^2)}}- e^{-0.5( \cos(2\pi x) + \cos(2\pi y))}+20+e
\end{equation}
interpolated on $10^3$ Halton points on the disk centered in $(0.5,0.5)$ and radius $0.5$ with the 
Wendland kernel W2. As evalution points we took a grid $100\time 100$ uniformly distributed points 
in the convex hull. 
We computed the RMS error at $30$ values of the shape parameter $\varepsilon \in [0.01, 2]$. 
The results are shown in the Figure \ref{fig:PUvsRPU_disk}.
\begin{figure}[t]
\centering
\includegraphics[width=5.5cm, height=5cm]{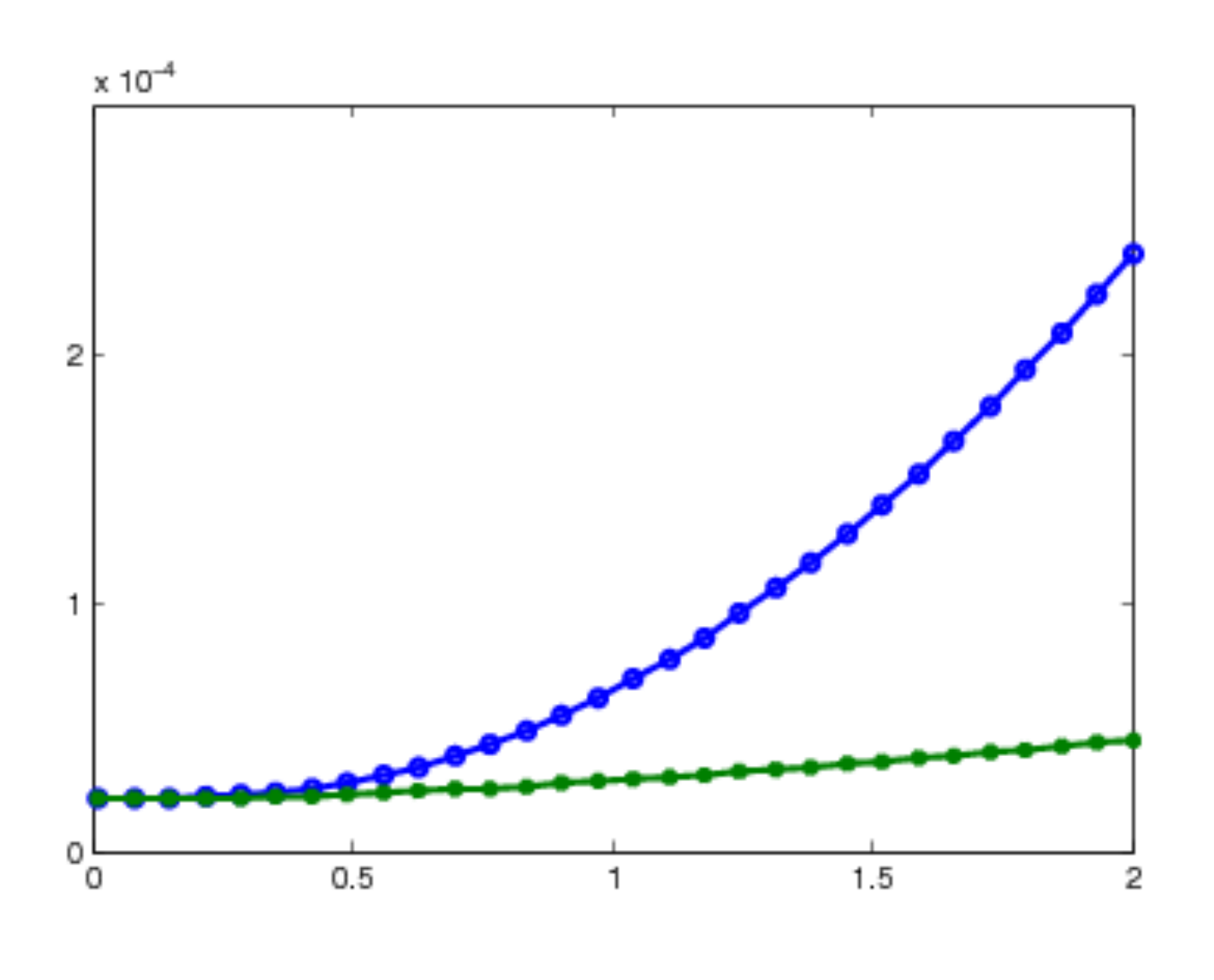}
\includegraphics[width=5.5cm, height=4.9cm]{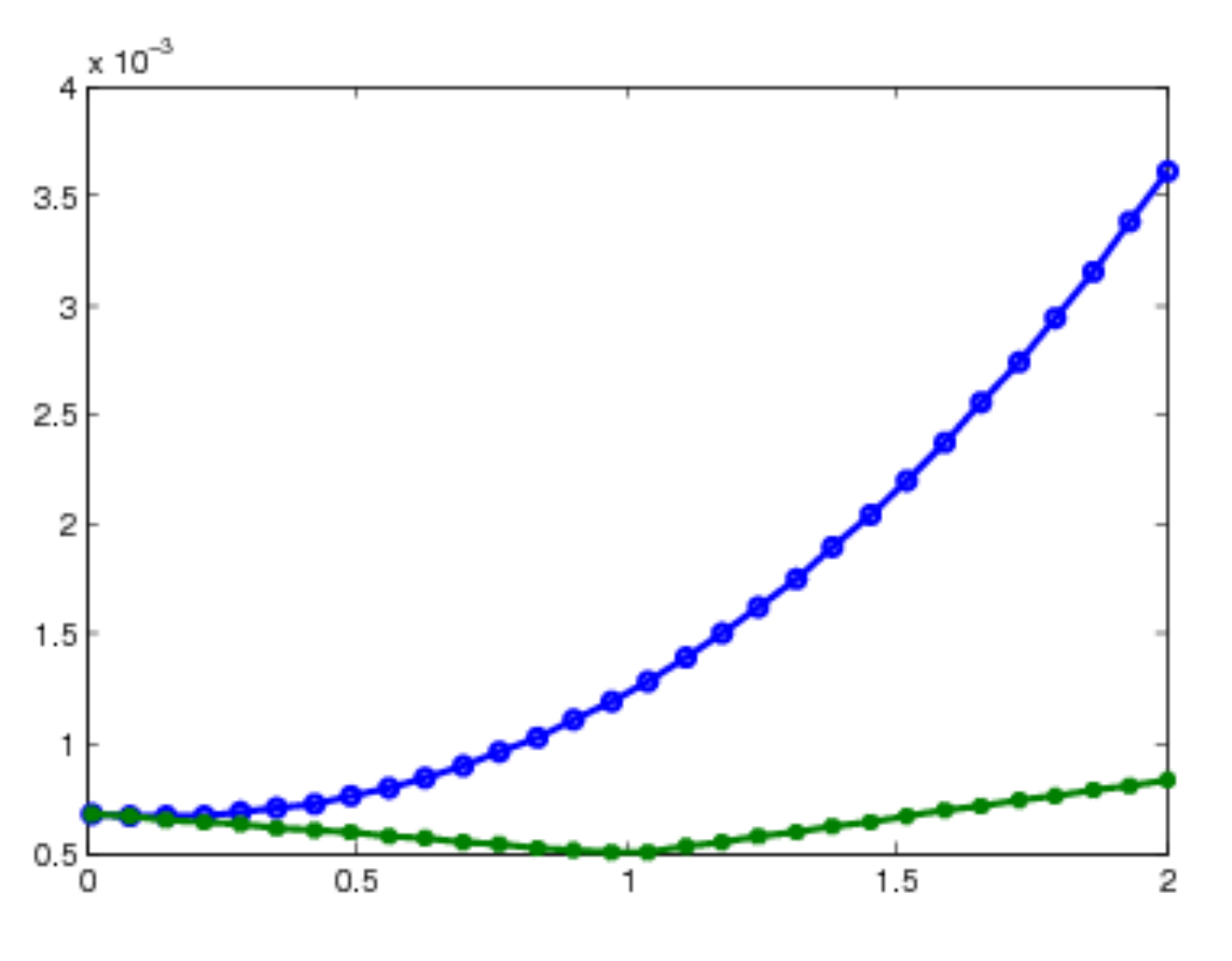}
\caption{RMSE (Left) and MAXERR (Right) for the classical and the rescaled PoU for $\varepsilon 
\in[0.01,2]$ chosen with Trial\&Error. }\label{fig:PUvsRPU_disk}
\end{figure}
The behaviour of the error of the rescaled PU (RPU) interpolant that we notice in Fig 
\ref{fig:PUvsRPU_disk} has been observed in many other examples. Some remarks
\begin{itemize}
 \item the RPU reaches the same precision of the standard PU, but using a ``thinner'' points set;
 \item letting $T_{RPU}$ the evaluation time of the rescaled interpolant and $T_{PU}$ that of the 
classical one, 
 we observed thet $T_{RPU} < c\,T_{PU}$ with $c \approx 1.05$.
\end{itemize}

\vskip 0.2in

In the recent work \cite{DeRPV16} an {\it accurate PUM} (A-PUM), combined with an optimal local RBF 
approximation 
via a {\it priori} error estimates (as done in \cite{CDeRP16}) has been presented. 
The a method enables to select both suitable sizes of the different
PU subdomains and shape parameters, i.e. the {\it optimal couple} $ (r_j^*, \epsilon_j^*)$ for the 
subdomain $\Omega_j$. 
The method uses a bivariate LOOCV strategy, which generalizes to the 2-dimension the classical LOOCV 
(cf. e.g. \cite[\S 17.1.3]{F07}).
and it turns out to be suitable for data with non-homogeneous density. 

\begin{table}[t]
\begin{center}
\begin{tabular}{c|c|c||c|c|c|}
\hline 
 \#DataP& \#EvalP & Method & RMSE & CPU time (sec)\\
 \hline 
 289&1600&A-PUM& 5.72e-4& 3.44 \\
 & & PU& 4.34e-2 & 0.32 \\
 & & RPUM & 1.50e-2 & 0.32\\
 \hline 
 1024&2500&A-PUM& 1.32e-4& 13.66 \\
 & & PU& 1.54e-2 & 0.63 \\
 & & RPUM & 7.55e-3 & 0.66\\
  \hline 
 2500&6400&A-PUM& 6.67e-5& 32.42 \\
 & & PU& 6.14e-3 & 1.32 \\
 & & RPUM & 2.89e-3 & 1.30\\
 \hline
 \end{tabular}
 \caption{Comparison between A-PUM, PUM and RPUM with $\epsilon=5$ with W2, on various grids on the 
square $[0,1]^2$, for
 interpolation of $f(x_1,x_2)=(x_1^2+x_2^2-1)^9$.} \label{Table444}
\end{center} 
\end{table}

Variably Scaled Kernels (VSK) were introduced in \cite{BLRS15} 
with the aim to get more flexibility and better approximation properties with
respect to the fixed scaling, commonly used in RBF interpolation. 
The idea consists in defining a {\it scale function}, say $c: \R^d \longrightarrow [0,\infty)$, 
that 
transforms the interpolation problem with data locations ${\bs x}_j \in \R^d$ to data 
location $({\bs x}_j, c({\bs x}_j)) \in \R^{d+1}$ and then use a fixed-scale kernel on $\R^{d+1}$. 
Following \cite{BLRS15}, a VSK can be defined 
in a general way by introducing a scale function (instead of a scalar shape parameter). 
\begin{definition}
Let $c : \R^d \rightarrow \R$ be a scale function, then a variably scaled kernel associated to a 
kernel $K$ on $\R^{d+1}$ is
$K_c({\bs x}, {\bs y}):=K(({\bs x}, c({\bs x}), ({\bs y}, c({\bs y}))$ .
\end{definition}
By construction $K_c$ is on $\R^d$. We recall also two results, 
whose proofs are in \cite{BLRS15}, showing that the new kernels preserve the same properties as the 
original one
\begin{theorem}
If $K$ is strictly positive definite on $\R^{d+1}$ so is $K_c$ on $\R^d$.
\end{theorem}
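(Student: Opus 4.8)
The plan is to reduce the statement directly to the strict positive definiteness of $K$ on $\R^{d+1}$, by exhibiting, for any admissible point set in $\R^d$, a corresponding point set in $\R^{d+1}$ whose $K$-collocation matrix coincides with the $K_c$-collocation matrix.

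First I would fix an arbitrary set of pairwise distinct points ${\bs x}_1,\dots,{\bs x}_N\in\R^d$ and consider their lifts ${\bs y}_j:=({\bs x}_j,c({\bs x}_j))\in\R^{d+1}$. The key observation --- and essentially the only thing to check --- is that the lifting map ${\bs x}\mapsto({\bs x},c({\bs x}))$ is injective: if ${\bs y}_i={\bs y}_j$, then in particular their first $d$ components coincide, i.e. ${\bs x}_i={\bs x}_j$, hence $i=j$. Thus ${\bs y}_1,\dots,{\bs y}_N$ are $N$ pairwise distinct points of $\R^{d+1}$.

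Next I would compare the two collocation matrices. By the definition of $K_c$ we have $K_c({\bs x}_i,{\bs x}_j)=K(({\bs x}_i,c({\bs x}_i)),({\bs x}_j,c({\bs x}_j)))=K({\bs y}_i,{\bs y}_j)$ for all $i,j$, so the matrix $(K_c({\bs x}_i,{\bs x}_j))_{i,j=1}^N$ is exactly the collocation matrix of $K$ on the distinct points ${\bs y}_1,\dots,{\bs y}_N\in\R^{d+1}$. Since $K$ is strictly positive definite on $\R^{d+1}$, this matrix is positive definite; as the point set in $\R^d$ was arbitrary, $K_c$ is strictly positive definite on $\R^d$. Symmetry of $K_c$ is inherited from that of $K$ in the same way.

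There is no real obstacle here: the whole argument hinges on the trivial fact that appending the extra coordinate $c({\bs x})$ cannot merge distinct points, so positivity of the associated quadratic forms is transported verbatim from $\R^{d+1}$ to $\R^d$. The only point deserving a word of care is that one must start from distinct points in $\R^d$ --- not in $\R^{d+1}$ --- and verify that their lifts remain distinct; the converse implication would fail in general, which is precisely why strict positive definiteness is inherited only ``downwards'' from the larger space.
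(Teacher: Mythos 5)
Your argument is correct and is essentially the paper's own (one-line) justification: the $K_c$-collocation matrix at distinct points of $\R^d$ equals the $K$-collocation matrix at the lifted points $({\bs x}_j,c({\bs x}_j))\in\R^{d+1}$, which remain distinct, so strict positive definiteness transfers directly. Your explicit check of injectivity of the lifting map is the only detail the paper leaves implicit.
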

In fact the matrix $K_c({\bs x}_i, {\bs x}_j)=K( ({\bs x}_i, c({\bs x}_i), ({\bs x}_j, c({\bs 
x}_j))$
is still strictly positive definite whenever $K$ is.

Notice that in the case of {\it radial} kernel, $\Phi$, the new kernel takes the form
$K_c({\bs x}, {\bs y}):=\Phi(\| {\bs x} - {\bs y} \|^2 + (c({\bs x}) - c({\bs y}) )^2 )$
that reduces to the classical one if we take a constant scale function.

Consider the map 
$ \sigma : {\bs x} \to ({\bs x}, c({\bs x}))$
from $\R^d$ into a $d$-dimensional submanifold $\sigma(\R^d)$ of $\R^{d+1}$. 
If we then consider a discrete set $X=\{x_1,\ldots, x_N\} \subset \Omega \subset \R^d$, 
then $\sigma(X) \subset \sigma(\Omega) \subset \sigma(\R^d) \subset \R^{d+1}$. Hence, the 
interpolant becomes
\begin{equation}
P_{\sigma,f,X}({\bs x})=P_{1,f,\sigma(X)}({\bs x}, c({\bs x}))= P_{1,f,\sigma(X)}(\sigma({\bs 
x}))\,.
\end{equation}
This says that we can consider the interpolant $P_{1,f,\sigma(X)}$ at scale $1$ of the data of $f$ 
at the points $({\bs x}_j, c({\bs x}_j)), \; j=1,\ldots,N$ of $\sigma(X)$. This also means that in 
$\R^{d+1}$ we use the kernel $K_c$ and if
we project the points $({\bs x}, c({\bs x}) \in \R^{d+1}$ back to ${\bs x} \in \R^d$, the projection 
of the kernel $K_c$ on $\R^{d+1}$ turns
into a variable shape kernel on $\R^d$ if the shape function $c$ is not constant.

In particular the error analysis and stability of this variably scaled problem in $\R^d$ is 
the same of the one with a fixed scale on a submanifold $\sigma(\R^d)$ of $\R^{d+1}$.

The VSK approach has another interesting property
\begin{theorem}
The native spaces ${\cal N}_K$ and ${\cal N}_{K_c}$ are isometric. {\rm The proof is at \cite[p. 
204]{BLRS15}}.
\end{theorem}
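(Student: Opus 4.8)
The plan is to exhibit an explicit isometry between the two native spaces by transporting the generating kernel translates through the graph map $\sigma({\bs x})=({\bs x},c({\bs x}))$. Recall that ${\cal N}_{K_c}$ is the completion of $H_0^c:=\mathrm{span}\{K_c(\cdot,{\bs x}):{\bs x}\in\R^d\}$ under the inner product fixed by $\langle K_c(\cdot,{\bs x}),K_c(\cdot,{\bs y})\rangle=K_c({\bs x},{\bs y})$, with the reproducing property $g({\bs x})=\langle g,K_c(\cdot,{\bs x})\rangle$; similarly ${\cal N}_K$ is the completion of $H_0:=\mathrm{span}\{K(\cdot,{\bs z}):{\bs z}\in M\}$, where we read ${\cal N}_K$ as the native space of $K$ on the submanifold $M:=\sigma(\R^d)\subset\R^{d+1}$, consistently with the discussion preceding the statement (equivalently, the restriction of the native space of $K$ on $\R^{d+1}$ to $M$, which by Aronszajn is itself an RKHS with the restricted kernel). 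The first observation is that $\sigma$ is a bijection from $\R^d$ onto $M$, its inverse being the restriction to $M$ of the projection onto the first $d$ coordinates; this injectivity is exactly what makes the transport well posed.

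Next I would define a linear map $T:H_0^c\to H_0$ on generators by $T(K_c(\cdot,{\bs x})):=K(\cdot,\sigma({\bs x}))$, extended by linearity. The key computation is that $T$ preserves inner products on generators: by the very definition $K_c=K\circ(\sigma\times\sigma)$,
\[
\langle T K_c(\cdot,{\bs x}),\,T K_c(\cdot,{\bs y})\rangle_{{\cal N}_K}=K(\sigma({\bs x}),\sigma({\bs y}))=K_c({\bs x},{\bs y})=\langle K_c(\cdot,{\bs x}),\,K_c(\cdot,{\bs y})\rangle_{{\cal N}_{K_c}}.
\]
By bilinearity this extends to all finite linear combinations, so $T$ is a linear isometry. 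In particular it is well defined: a combination $g=\sum_i a_i K_c(\cdot,{\bs x}_i)$ that vanishes as a function has $\|g\|^2=\sum_i a_i\,g({\bs x}_i)=0$, hence its image has zero norm, hence also vanishes as a function; and $T$ is injective for the same reason. Surjectivity onto $H_0$ is immediate, since every $K(\cdot,{\bs z})$ with ${\bs z}\in M$ equals $K(\cdot,\sigma({\bs x}))$ for a unique ${\bs x}\in\R^d$. Being an isometric isomorphism between dense subspaces, $T$ extends uniquely to an isometric isomorphism $\overline T:{\cal N}_{K_c}\to{\cal N}_K$.

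Finally I would identify $\overline T$ as the pullback along $\sigma$, namely $(\overline T f)(\sigma({\bs x}))=f({\bs x})$ for every $f\in{\cal N}_{K_c}$ and ${\bs x}\in\R^d$. For $f=K_c(\cdot,{\bs y})$ this is precisely the chain of equalities above, $(\overline T K_c(\cdot,{\bs y}))(\sigma({\bs x}))=K(\sigma({\bs x}),\sigma({\bs y}))=K_c({\bs x},{\bs y})$; the general case follows because point evaluation is continuous in both native spaces and $\overline T$ is continuous, so the identity passes to limits. This realizes the abstract isometry as a concrete, function-space-level correspondence, and the proof is complete.

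The only genuinely delicate point is this last identification step: one must ensure that the abstract Hilbert completions are honestly realized as spaces of functions, on $M$ and on $\R^d$ respectively, and that $\overline T$ respects that realization. This is exactly the standard native-space machinery (completion, continuity of evaluation, extension by density); the sole VSK-specific inputs are the injectivity of $\sigma$ and the algebraic identity $K_c=K\circ(\sigma\times\sigma)$. Everything else is bookkeeping once the isometry on the dense subspaces is in place.
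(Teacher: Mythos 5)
Your proof is correct, and since the paper offers no argument of its own for this statement beyond the citation to \cite{BLRS15}, your construction --- transporting kernel translates through the graph map $\sigma$, checking that inner products are preserved via $K_c({\bs x},{\bs y})=K(\sigma({\bs x}),\sigma({\bs y}))$, and extending the resulting unitary by density to the pullback along $\sigma$ --- is exactly the standard argument given in that reference. Your reading of ${\cal N}_K$ as the native space of $K$ restricted to the submanifold $\sigma(\R^d)$ (rather than on all of $\R^{d+1}$) is the correct interpretation and is consistent with the paper's discussion immediately preceding the theorem.
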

\vskip 0.2in

\noindent For $d=2$ we performed the following experiment. Consider the Franke test function 
(\ref{f12d}), 
the compactly supported W2 sampled on $200$ equally spaced points of half unit sphere centered in 
$(0,0,0)$.
The nodes in $\R^2$ are the projections on the unit disk of the previous ones, so that $c({\bs 
x})=\sqrt{1-x_1^2-x_2^2}$.
The evaluation points in $\R^2$ are obtained by restricting the grid $100 \times 100$ of the square 
$[-1,1]^2$ to the unit disk, 
while the points in $\R^3$ are obtained by the map 
$\sigma({\bs x})=({\bs x}, c({\bs x}))$ (in Figure \ref{fig:points_halfsphere} we show only 100 
points). The shape parameter is $\varepsilon=5$. 
In Table \ref{Table3} we report the results of the corresponding RMSE while in Table \ref{Table4} 
the max err.
Similar results, for this example, can be observed with different values of the shape parameter. 
In Figure \ref{fig:rmse_comparison_all} we plot the RMSE in the stationary case for the three 
methods taking $20$ values of $\varepsilon \in [0.1,5]$.

\begin{figure}[t]
\begin{center}
\includegraphics[scale=0.5]{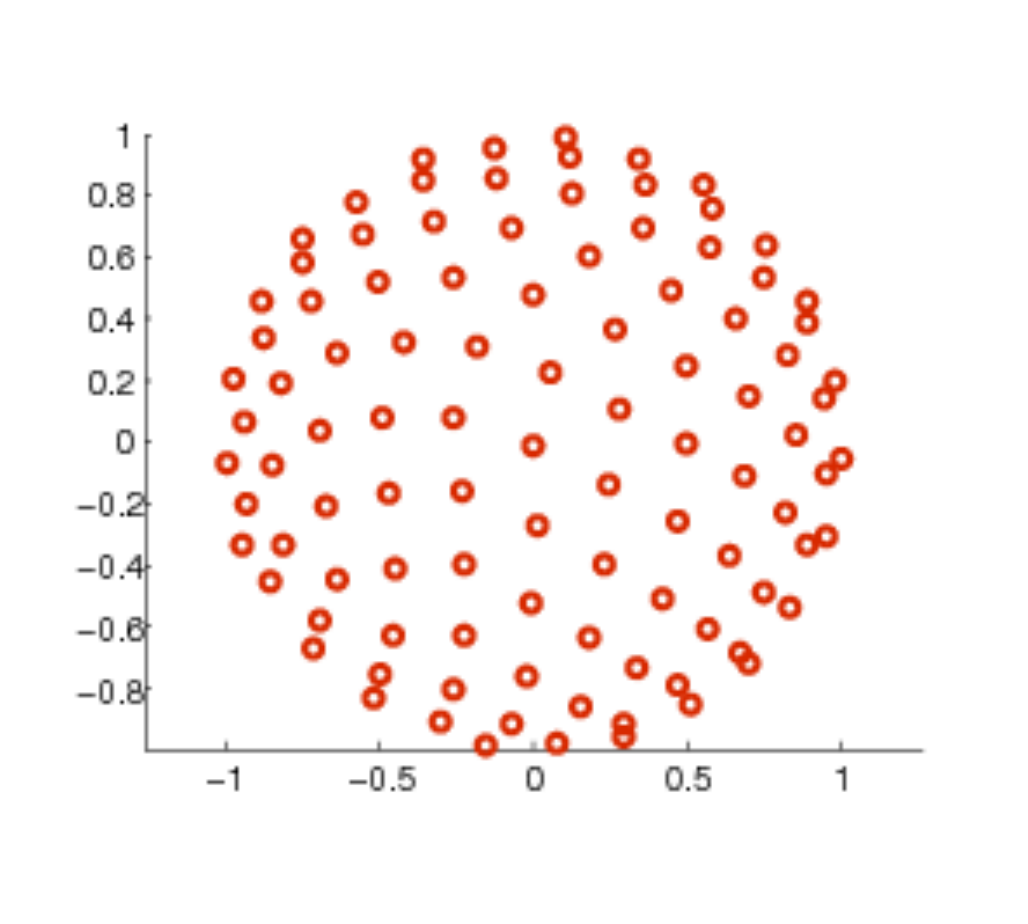} \\
\includegraphics[scale=0.5]{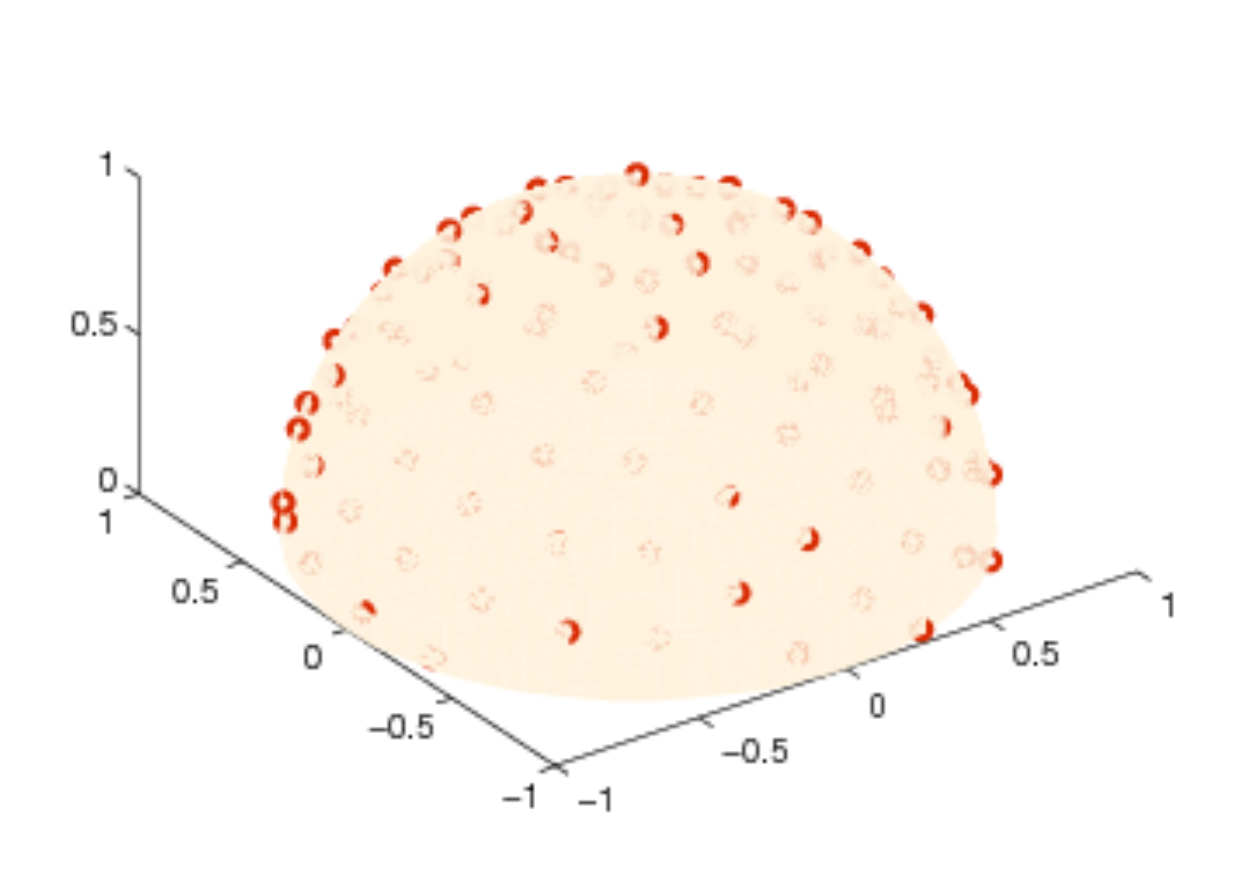}
\caption{The points of the VSK example }\label{fig:points_halfsphere}
\end{center}
\end{figure}
\begin{figure}[t]
\begin{center}
\includegraphics[width=9cm, height=7cm]{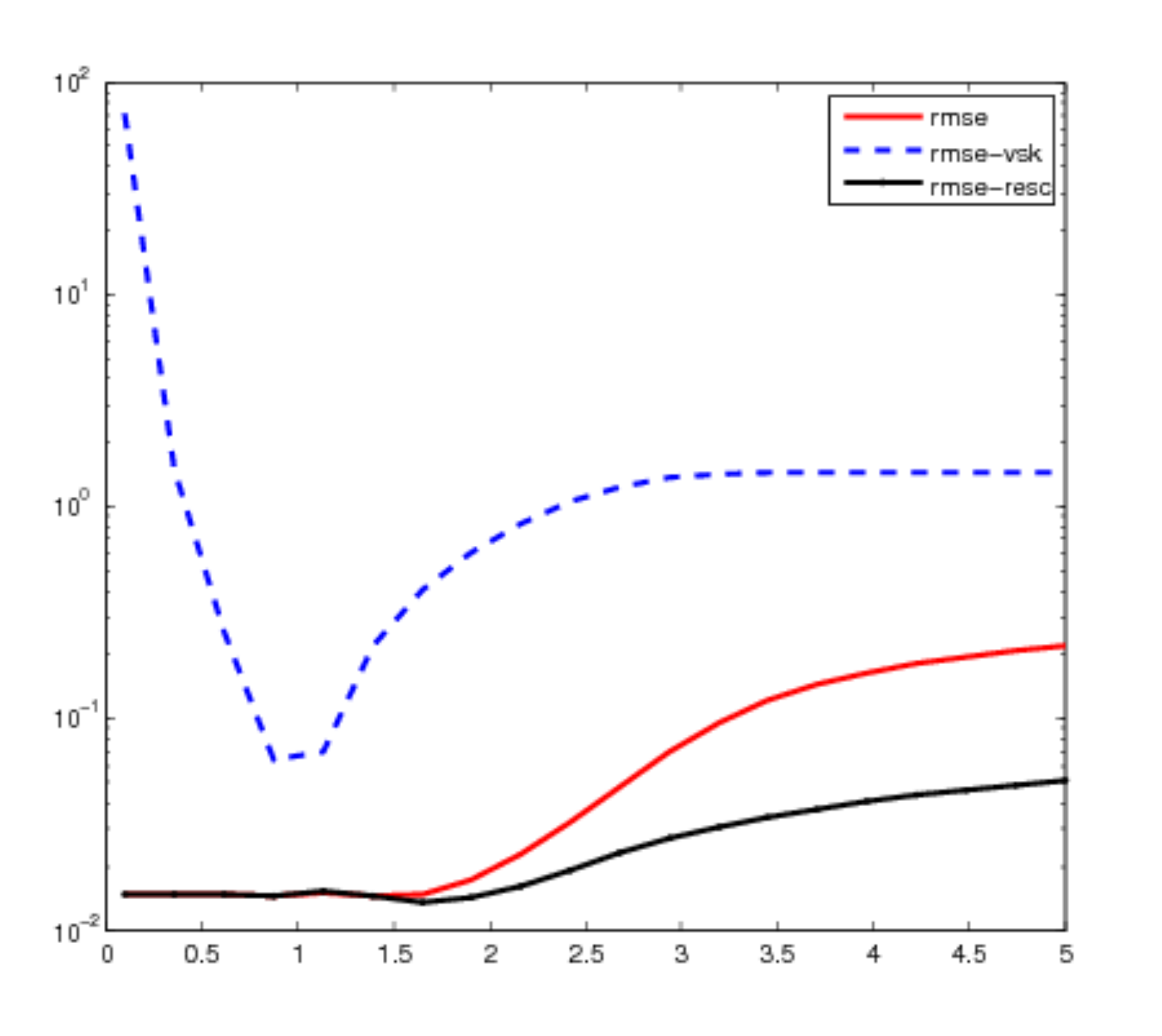} 
\caption{Classical, VSK and Rescaled method for W2 on varying the shape parameter 
}\label{fig:rmse_comparison_all}
\end{center}
\end{figure}
\begin{table}
\begin{center}
\caption{RMSE with and without rescaling applied to variably scaled kernel} \label{Table3}
\begin{tabular}{p{2cm}p{4.0cm}p{2.5cm}}
\hline\noalign{\smallskip}
 & Standard & +R \\
\noalign{\smallskip}\hline\noalign{\smallskip}
 Standard &  $2.2e-01$ &$5.1e-02$ \\ 
+VS& $1.5e+00$& $9.6e-02$ \\
\noalign{\smallskip}\hline\noalign{\smallskip}
 \end{tabular}
 \end{center}
\end{table} 
\begin{table}
\begin{center}
\caption{Max err with and without rescaling applied to variably scaled kernel} \label{Table4}
\begin{tabular}{p{2cm}p{4.0cm}p{2.5cm}}
\hline\noalign{\smallskip}
 & Standard & +R \\
\noalign{\smallskip}\hline\noalign{\smallskip}
 Standard & $9.7e-01$ & $4.2e-01$\\ 
 +VS & $3.2e+00$ & $5.3e-01$ \\
\noalign{\smallskip}\hline\noalign{\smallskip}
 \end{tabular}
 \end{center}
\end{table} 

\section{Conclusions}
In this paper we have studied more deeply the rescaled localized radial basis function interpolant 
introduced in \cite{DFQ14}. We have proved
that this interpolant gives a partition of unity method that reproduces constant functions, exactely 
as does the Shepard's method. 
One feature is that the shape parameter of the kernel can be chosen in a safe range, not too small 
and relatively large, avoiding some
numerical instability tipical that occours when the shape parameter is too small, or a severe 
ill-conditioning in the opposite case.  
The method performs better, in terms of error and CPU time, than a classical PUM. Moreover if it is 
coupled with a variably scaled kernel
strategy it allows to control the errors and the condition number of the system (as shown in Table 
\ref{Table4}).

\vskip 0.2in
\textbf{Acknowledgement:} This work has been supported by the INdAM-GNCS funds 2016 and by the ex 
60\% funds, year 2015, of the University of Padova. 


\end{document}